\numberwithin{equation}{section}
\theoremstyle{plain}
\newtheorem*{theorem*}{Theorem}
\newtheorem*{proposition*}{Proposition}
\newtheorem{theorem}{Theorem}[section]
\newtheorem{thm}[theorem]{Theorem}
\theoremstyle{remark}
\newtheorem*{remark}{Remark}
\theoremstyle{definition}
\newtheorem{definition}[theorem]{Definition}
\newtheorem{defn}[theorem]{Definition}
\newcommand{\D}{\mathcal{D}}
\newcommand{\R}{\mathbb{R}}
\newcommand{\E}{\mathbb{E}}
\renewcommand{\L}{\mathcal{L}}
\DeclareMathOperator{\dcov}{dcov}
\DeclareMathOperator{\dCov}{dCov}
\DeclareMathOperator{\dcor}{dcor}
\DeclareMathOperator{\dCor}{dCor}
\DeclareMathOperator{\dVar}{dVar}
\DeclareMathOperator{\dvar}{dvar}
\DeclareMathOperator{\PL}{PL}
\DeclareMathOperator{\SW}{SW}
\newcommand{\ie}{}
\def\ie/{i.e.}
\newcommand{\eg}{}
\def\eg/{e.g.}
\title{Same But Different\\
  \large Distance Correlations Between Topological Summaries}
\author{Katharine Turner and Gard Spreemann}
\date{\today}
\begin{document}
\maketitle

\begin{abstract}

Persistent homology allows us to create topological summaries of
complex data. In order to analyse these statistically, we need to
choose a topological summary and a relevant metric space in which this
topological summary exists. While different summaries may contain the
same information (as they come from the same persistence module), they
can lead to different statistical conclusions since they lie in
different metric spaces. The best choice of metric will often be
application-specific. In this paper we discuss distance correlation,
which is a non-parametric tool for comparing data sets that can lie in
completely different metric spaces. In particular we calculate the
distance correlation between different choices of topological
summaries. We compare some different topological summaries for a
variety of random models of underlying data via the distance
correlation between the samples. We also give examples of performing
distance correlation between topological summaries and other scalar
measures of interest, such as a paired random variable or a parameter
of the random model used to generate the underlying data. This article
is meant to be expository in style, and will include the definitions
of standard statistical quantities in order to be accessible to
non-statisticians.

\end{abstract}

\section{Introduction}

The development and application of statistical theory and methods
within Topological Data Analysis (TDA) are still in their
infancies. The main reason is that distributions of topological
summaries are harder to study than distributions of real numbers, or
of vectors. Complications arise both from the geometry of the spaces
that the summaries lie in, and, more importantly, from the complete
lack of nice parameterised families which one could expect the
distributions of topological summaries to follow. Even when the
distribution of filtrations of topological spaces is parametric,
topological summaries do not necessarily preserve distributions in any
meaningful way, so the resulting topological summaries will generally
not be in the form of a tractable exponential family. Effectively none
of the methods from basic statistics can be directly applied, at least
not without significant caveats and great care. We should instead turn
to the world of non-parametric statistics, in which the methods are
usually distribution-free and can sometimes be applied to random
elements lying in quite general metric spaces.

A quintessential example of the challenges faced when improving the
statistical rigour in TDA is that of correlation. The Pearson
correlation coefficient is the correlation, the ``r-value'', taught in
every introductory course on statistics. However, it is only defined
for real-valued functions, and inference involving Pearson correlation
often assumes the variables follow normal distributions. It is very
much a method from parametric statistics. It measures the strength of
the linear relationship between normally distributed random
variables. Here the parametric families are the normal distributions
with the mean and covariance as the parameters, and the correlation
coefficient is a straightforward function of the covariance
matrix. The Pearson correlation is very useful and appropriate if the
distributions are normal.  However, that is a very big ``if''; and one
that will rarely hold for topological summaries.

Thankfully for practitioners of TDA, correlation as a concept is not
defined by the formula of the Pearson correlation coefficient, but
rather should be thought of more philosophically as some quantity
measuring the extent of interdependence of random variables. This
research is the result of a treasure hunt within the field of
non-parametric statistics for an appropriate notion of correlation
applicable to topological summaries. Our finding was distance
correlation.  Effectively it considers the correlations between
pairwise distances (appropriately recentred) instead of the raw
values. This makes it applicable to distributions over any pair of
metric spaces.

Distance correlation is a distribution-free method and exemplifies a
non-parametric approach. It can detect relationships between variables
that are not linear, and not even monotonic. If the variables are
independent, then the distance correlation is zero. In the other
direction, if the metric spaces are of strong negative type, then a
distance correlation of zero implies the variables are
independent. This is true for any joint distribution. In contrast, we
can only conclude from a Pearson correlation coefficient of zero that
the variables are independent if we assume that the joint distribution
is bivariate normal.

There are two take-home messages. The first is that distance
correlation is a useful tool in the statistical analysis of
topological summaries. The current exposition serves as an
introduction to the potential of distance correlation for statistical
analysis in TDA. In section~\ref{sec:future} we outline some further
opportunities that distance correlation can offer. The second message
is the simple observation that the choice of topological summary
statistic matters. A responsible topological data analyst should
carefully consider which is the most appropriate topological
summary. A better choice is one where the pairwise distances better
reflect the differences of interest in the raw data. This will be
domain- and application-specific. There may be other considerations
for the choice of topological summary in terms of the statistical
methods available, computational complexity and inference power, but
this is beyond the scope of this discussion.

\section{Background theory}

We summarize the relevant basic concepts from TDA, the statistical
analysis of topological summaries, and of metric spaces of strong
negative type.

\subsection{Topological summary statistics}

TDA is usually concerned with analysing complex and hard-to-visualize
data. This data may have complicated geometric or topological
structure, and one creates a family of topological spaces which can
then be studied using algebraic-topological methods in order to reveal
information about said structure. We call a family of spaces
$(K_a)_{a\in\R}$ such that $K_a\subset K_b$ whenever $a\leq b$ a
\emph{filtration}. The inclusion $K_a \subset K_b$ for $a \leq b$
induces a homomorphism $H_k(K_a)\to H_k(K_b)$ between the homology
groups. The \emph{persistent homology group} is the image of
$H_k(K_a)$ in $H_k(K_b)$. It encodes the $k$-cycles in $K_a$ that are
independent with respect to boundaries in $K_b$, \ie/
\begin{equation}
  H_k(a,b) := Z_k(K_a)/ (B_k(K_b) \cap Z_k(K_a)).
\end{equation}
where $Z_k$ and $B_k$ are, respectively, the kernel and the image of
the $k$'th boundary map in the given homology theory.

%Algorithms for computing persistent homology from a given filtration are quite simple in their most basic form \cite{edelsbrunner_topological_2002,zomorodian_computational_2009}, and are now implemented efficiently in a number of freely-downloadable packages~\cite{morozov_dionysus_2013,tausz_javaplex_2014,bauer_phat_2014,nanda_perseus_2015}. 

Under very general assumptions on the filtration, and assuming one
works with coefficients in a field, persistent homology is fully
described by two equivalent representations: the barcode and the
persistence diagram. The barcode is a collection of intervals $[b,d)$
each representing the first appearance (``birth''), $b$, and first
disappearance (``death''), $d$, of a persistent homology class. This
collection of intervals satisfies the condition that for every $b\leq d$,
the number of intervals containing $[b,d)$ is
$\dim(H_k(b,d))$. The corresponding persistence diagram is the
multi-set of points in the plane with birth filtrations as one coordinate
and death filtrations as the other.\footnote{We will consider only persistent homology with intervals with finite death.}

A \emph{summary statistic} is a object that is used to summarise a set
of observations, in order to communicate the largest amount of
information as simply as possible. Simple examples in the case of
real-valued distributions include the mean, the variance, the median
and the box plot. Many summary statistics in TDA are created via
persistent homology. We have a filtration of topological spaces built
from our observations, and by applying persistent homology we can
summarise this filtration in terms of the evolution of
homology. Notably, one creates a summary from a single complex object,
whether it be a point cloud, a graph, etc.

There are now a wide array of topological summaries that can be
computed directly from a persistence diagram or barcode. Each of these
is a different expression of the persistent homology in the form of a
topological summary statistic. The practitioner wanting to perform
statistical analysis using topological summaries needs to choose which
type of summary to represent their data with, as well as the metric on
the space where that summary takes values. For some of these different
topological summaries there are parameters to choose which play roles
like bandwidth, and some depend on a choices like norm order (for us,
$p\in \{1,2,\infty\}$) akin to choosing $p$ in the $L^p$ distance for
function spaces. In addition, there are topological summaries not
based on persistent homology, such as simplex count functions.

In this paper we will consider a range of different topological
summaries and distances defined on them, namely:
\begin{itemize}
	\item Persistence diagrams, with Wasserstein distances for $p=1,2, \infty$
	\item Persistence landscapes~\cite{Landscapes}, with $L^p$ distances for $p=1,2, \infty$
	\item Persistence scale space kernel~\cite{Heat} for
          two different bandwidths, with $L^2$ distances
	%\item Accumulation Persistence Function, with $L^2$ distances
	\item Betti and Euler characteristic curves, with $L^p$ distances for $p=1,2$
	\item Sliced Wasserstein kernel~\cite{Sliced} distance
\end{itemize}

Note that all of these topological summaries can be computed from the
persistence diagram and that with the exception of the Betti and Euler
curves, which collapse information, they all are distance functions of
the information provided in the original persistence diagram. In this
sense they are the ``same''. It is merely the metric space structure
that is different.

It is worth noting that the above list is by no means an exhaustive
list of topological summaries. Other examples include the persistent
homology rank function~\cite{rank}, the accumulation persistence
function~\cite{apf}, the persistence weighted Gaussian kernel
\cite{PWGK}, the persistence Fisher kernel~\cite{fisher}, using
tangent vectors from the mean of the square root framework with
principal geodesic analysis~\cite{Anirudh}, using points in the
persistence diagrams as roots of a complex polynomial for
concatenated-coefficient vector representations~\cite{comparing}, or
using distance matrices of points in persistence diagrams for
sorted-entry vector representations~\cite{Carriere}. Notably, most of
these are functional summaries with an $L^2$ metric or lie in a
reproducing kernel Hilbert space. Analogous arguments to those for the
persistence scale space discussed later could be used to show that
many of them lie in metric spaces of strong negative type as a
corollary of being separable Hilbert spaces.

\subsection{Distance correlation}

A \emph{random element} is a map from a probability space $\Omega$ to
a set $\mathcal{X}$. Its distribution is the pushforward measure on
$\mathcal{X}$. Given two random elements $X:\Omega \to \mathcal{X}$
and $Y:\Omega \to \mathcal{Y}$, one can consider the paired samples
$(X,Y): \Omega \to \mathcal{X}\times \mathcal{Y}$. This has a
\emph{joint distribution} on $\mathcal{X}\times \mathcal{Y}$. The
\emph{marginal distributions} for this joint distribution are the
pushforwards via the projection maps onto each of the coordinates. An
important notion in statistics is whether two variables are
\emph{independent}. This occurs precisely when the joint distribution
is the product of the marginal distributions.

The most common measure of correlation between two random variables is
the Pearson correlation coefficient. It is defined as the covariance
of the two variables divided by the product of their standard
deviations. For paired random variables $X, Y$, the Pearson
correlation coefficient is defined by
\begin{equation*}
  \rho_{X,Y}=\frac{\E[(X-\overline{X})(Y-\overline{Y})]}{\sigma_X \sigma_Y},
\end{equation*}
where $\overline{X}, \overline{Y}$ are the means of $X$ and $Y$,
$\sigma_X,\sigma_Y$ their standard deviations, and $\E$ denotes
expectation.  Note that if $X$ and $Y$ are independent, then
$\E[(X-\overline{X})(Y-\overline{Y})]=\E[(X-\overline{X})]\E[(Y-\overline{Y})]=0$. This
implies that a non-zero correlation is evidence of a lack of
independence, and hence the variables are related somehow (though
possible only indirectly).

The Pearson correlation is designed to analyse bivariate Gaussian
distributions. In this case, a correlation of $0$ implies that
the variables are independent. Furthermore, Pearson correlation
determines the ellipticity of the distribution. We can calculate the
Pearson correlation for more general distributions, but in that case
it detects linear relationships, and nonlinear relationships can be
lost. Some examples of the Pearson correlation coefficient are
illustrated in Figure~\ref{fig:PearsonvsDistance}. Any test using the
correlation coefficient (such as significance testing) depends on the
bivariate Gaussian assumption.

In parametric statistics we make some assumption about the parameters
(defining properties) of the population distribution(s) from which the
data are drawn, while in non-parametric statistics we do not make such
assumptions. Given the lack of parametric families of topological
summary statistics, it makes sense to consider non-parametric
methods. One option when studying real-valued random variables which
are not normally distributed, or when the relationship between the
variables is not linear, is to use the rankings of the samples.  It
should also be mentioned that such ranking correlations are designed
to detect monotone relationships, which --- although more general than
the linearity of Pearson's correlation --- is still a significant
restriction. There are multiple ways to measure the similarity of the
orderings of the data when ranked by each of the quantities. The most
common is the Spearman rank correlation method, which is the Pearson
correlation of the ranks. Alternatives are Kendall's $\tau$ and
Goodman and Kruskall's $\gamma$, which measure pairwise
concordance. We say that a pair of samples is \emph{concordant} if the
cases are ranked in the same order for both variables. They are
\emph{reversed} if the orders differ. We drop any pair of samples
where the values in either of the variables is equal. We then define
\begin{equation*}
  \tau:=\frac{N_{\mathrm{s}}-N_{\mathrm{d}}}{N_{\mathrm{s}}+N_{\mathrm{d}}}\qquad \text{and} \qquad  \gamma:=\frac{N_{\mathrm{s}}-N_{\mathrm{d}}}{n(n-1)/2},
\end{equation*}
where $N_{\mathrm{s}}$ is the number of concordant pairs,
$N_{\mathrm{d}}$ is the number of reversed pairs, and $n$ is the total
number of samples. The only difference between these rank correlations
is the treatment of pairs with equal rank; $\tau$ penalises ties while
$\gamma$ does not. While these methods are distribution free, they are
not suitable to be directly applied to topological summaries as the
summaries do not lie in spaces with an order. We cannot rank the
samples, and thus we can not apply tests that use the ranks.

A new, non-parametric alternative is to work with the pairwise
distances. Given paired samples $(X,Y) = \{(x_i,y_i) \mid i =
1,...,n\}$, where the $x_i$ and $y_i$ lie in metric spaces
$\mathcal{X}$ and $\mathcal{Y}$, respectively, we can ask what the
joint variability of the pairwise distances is (\ie/ how related
$d_\mathcal{Y}(y_i, y_j)$ is to $d_\mathcal{X}(x_i, x_j)$). The
statistical tools of distance covariance and distance correlation are
apt for this purpose. The notion was introduced in \cite{SRB} for the
case of samples lying in Euclidean space. 

%They showed that it can test
%independence, with a distance correlation of zero if and only if the
%variables are independent \fixme{I don't understand the sentence. I've
%  added the comma and the indefinite article in front of ``distance
%  correlation'' on a hunch, but I'm afraid that I may have changed the
%  meaning of the sentence.}. The method makes no assumptions on the
%distributions and can detect relationships that are highly
%nonlinear. In contrast, we can only conclude from zero Pearson
%correlation coefficient that the variables are independent when we can
%assume that the joint distribution is bivariate normal. This
%difference between Pearson correlation and distance correlation is
%illustrated with real valued random variables in
%Figure~\ref{fig:PearsonvsDistance}.
%
%Distance correlation can be applied to distributions of samples lying
%in more general metric spaces. It can detect relationships between
%variables that are not linear, and not even monotonic. If the
%variables are independent, then the distance correlation is zero. In
%the other direction, if the metrics spaces are of strong negative
%type, then distance correlation of zero implies that the variables are
%independent. This is true for any joint distribution.

Distance correlation can be applied to distributions of samples lying
in more general metric spaces. It can detect relationships between
variables that are not linear, and not even monotonic, as can be seen
in Figure~\ref{fig:PearsonvsDistance}. There are strong theoretical
results about independence. If the variables are independent then the
distance correlation is zero. In the other direction, if the metrics
spaces are separable and of strong negative type then distance
correlation of zero implies the variables are independent. This is
discussed in more detail in section~\ref{sec:Negtype}.

In contrast, we can only conclude from Pearson correlation coefficient
being zero that the variables are independent when we can assume that
the joint distribution is bivariate normal. This difference between
Pearson correlation and distance correlation is illustrated with real
valued random variables in Figure~\ref{fig:PearsonvsDistance}.

The formal definitions follow.

\begin{figure}[htbp]
  \centering
  \includegraphics[width=0.8\linewidth]{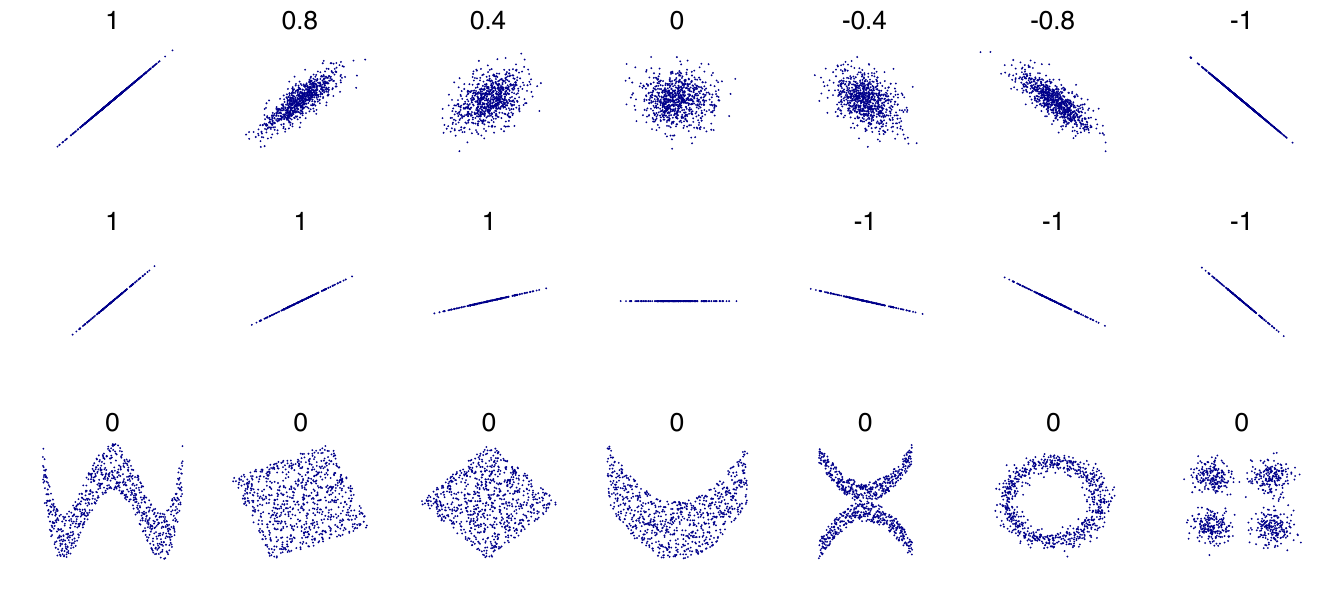}\\
  \vspace{3cm}
  \includegraphics[width=0.8\linewidth]{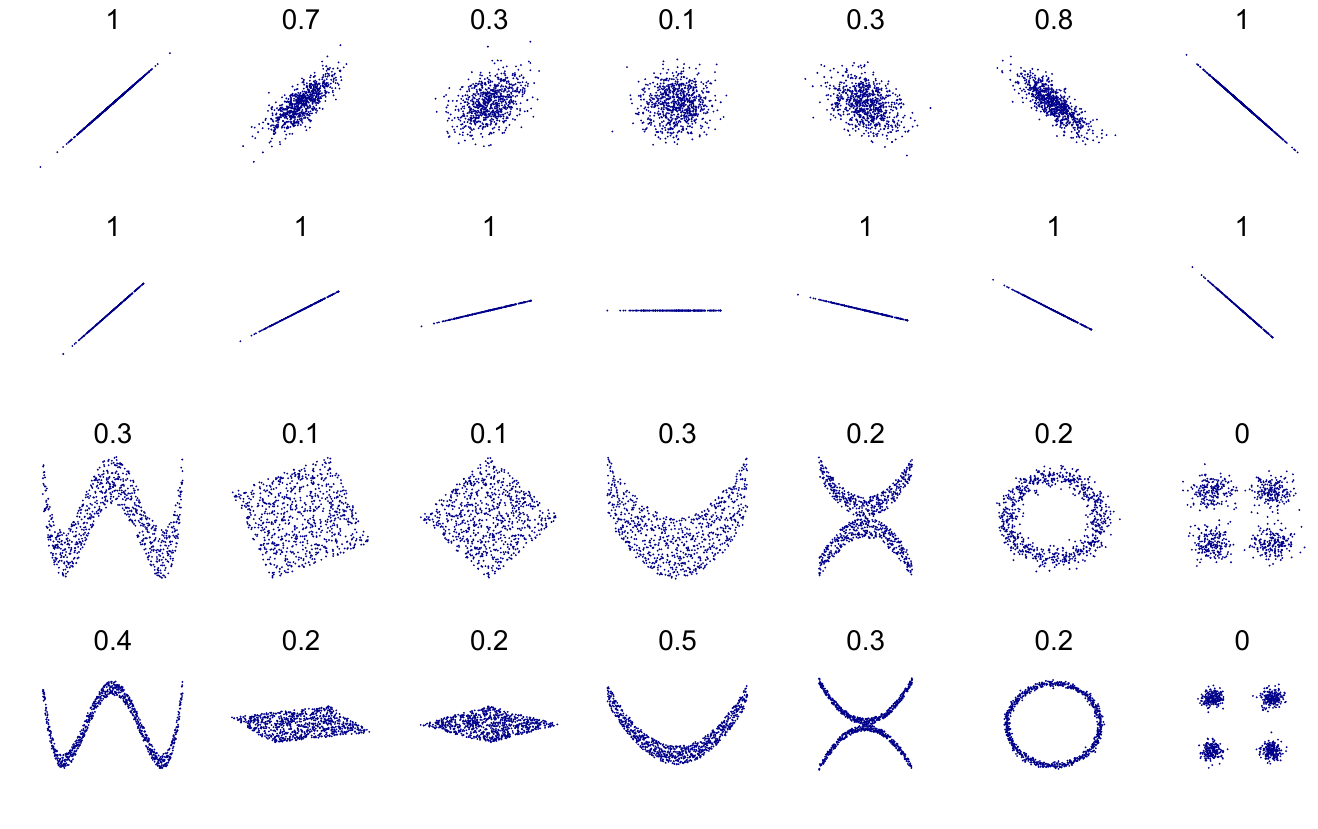}
  \caption{These examples demonstrate how distance correlation
    (\textbf{top}) is much more useful than Pearson correlation
    (\textbf{bottom}) when the joint distribution is not a
    multivariate Gaussian. The figures are taken
    from~\cite{wiki-corr,wiki-distcorr}.}
  \label{fig:PearsonvsDistance}
\end{figure}

\begin{defn}
Let $X$ be a random element taking values in a connected metric space
$(\mathcal{X}, d_{\mathcal{X}})$ with distribution $\mu$. For $x\in
\mathcal{X}$ we call $\E[d_{\mathcal{X}}(x,X)]$ the \emph{expected
  distance} of $X$ to $x$, and denote it by $a_\mu(x)$. We say that
$X$ has \emph{finite first moment} if for any $x\in\mathcal{X}$ the
expected distance to $x$ is finite. In this case we set
$D(\mu):=\E[a_\mu(x)]$. For $X$ with finite first moment we define its
\emph{doubly centred distance function} as
\begin{equation*}
  d_{\mu}(x,x'):=d_{\mathcal{X}}(x,x')-a_\mu(x) -a_\mu(x')+D(\mu).
\end{equation*}
\end{defn}

It is worth observing that $d_\mu$ is not a distance function. Lyons
showed~\cite{Lyons} that $a_\mu(x)>D(\mu)/2$ for all $x$ as long as
the support of $\mu$ contains at least two points. This implies
$d_\mu(x,x)<0$ for all $x$.

\begin{defn}
Let $\mathcal{X}$ and $\mathcal{Y}$ be metric spaces. Let
$\theta=(X,Y)$ be a probability distribution over the product space
$\mathcal{X} \times \mathcal{Y}$ with marginals $\mu$ and $\nu$ such
that $X$ and $Y$ both have finite first moment.  We define the
\emph{distance covariance} of $\theta$ as
\begin{equation*}
  \dcov(\theta)=\int d_\mu(x,x)d_\nu(y,y')d\theta^2((x,y),(x',y')).
\end{equation*}
\end{defn}

The \emph{distance variance} is a special case where have two
identical copies as the joint distributions $\theta^X=(X,X)$. Here we
have
\begin{equation*}
  \dvar(\theta^X)=\int d_\mu(x,x')^2d\theta^2((x,x')),
\end{equation*}
which is always non-negative, and zero only in the case of a
distribution supported on a single point.

The \emph{distance correlation} of $\theta=(X,Y)$ is defined as
\begin{equation*}
  \dcor(X,Y)=\frac{\dcov(X,Y)}{\sqrt{\dvar(\theta^X)\dvar(\theta^Y)}}.
\end{equation*}

\begin{remark}
There are some variations of notation with regard to whether to
include a square root in the definition of distance covariance and
correlation. In the introduction of distance correlation in
\cite{SRB}, the authors restricted their analysis to Euclidean spaces. Euclidean
spaces are metric spaces of negative type, and such spaces have the
property that the distance correlation is always non-negative. They
could thus define the distance covariance as
\begin{equation*}
 \sqrt{\int d_\mu(x,x)d_\nu(y,y')d\theta^2((x,y),(x',y'))}.
\end{equation*}
We will follow the notation of~\cite{Lyons} and use $\dCov$ to denote
the square root of $\dcov$, \ie/
\begin{equation*}
 \dCov(X,Y)=\sqrt{\dcov(X,Y)}=\sqrt{\int d_\mu(x,x)d_\nu(y,y')d\theta^2((x,y),(x',y'))}.
\end{equation*}
We will also use $\dVar$ as the square root of the distance variation
and $\dCor$ to denote the square root of the the distance correlation, \ie/
\begin{equation*}
  \dCor(X,Y)=\frac{\dCov(X,Y)}{\sqrt{\dVar(X)\dVar(Y)}}=\sqrt{\dcor(X,Y)}.
\end{equation*}
In the simulations and calculations involving topological summaries,
it turns out that all the values of the distance correlation are
non-negative, even for those involving spaces that are not of negative type.
\end{remark}

Given a set of paired samples drawn from a joint distribution, we can
compute a \emph{sample distance covariance}. This is an estimator of
the distance covariance of a joint distribution from which the paired
samples was taken.

The estimation of the distance correlation of a joint distribution by
sample distance covariances is reasonable. In other words this means
that if $\theta_n$ is the sampled joint distribution from $n$
i.i.d.\ samples of $\theta$ then $\dcov(\theta_n)\rightarrow
\dcov(\theta)$ as $n\to \infty$ with probability $1$. See Proposition
2.6 in \cite{Lyons}. This justifies the approximation of the distance
correlation via simulations. This is particularly important when
dealing with distributions for which there is no closed expressions,
which is usually the case when dealing with topological summaries

The following procedure computes the sample distance covariance
between paired samples $(X,Y) = \{(x_i,y_i) \mid i = 1,...,n\}$, which
we denote $\dcov_n (X,Y)$:
\begin{enumerate}
\item Compute the pairwise distance matrices $a=(a_{i,j})_{i,j}$,
  $b=(b_{i,j})_{i,j}$ with $a_{i,j}=d_{\mathcal{X}}(x_i, x_j)$ and
  $b_{i,j}=d_{\mathcal{Y}}(y_i,y_j)$.
\item Compute the means of each row and column in $a$ and $b$ as well
  as the total means of the matrices. Let $\bar{a}^i$ and $\bar{b}^i$
  denote the row means and $\bar{a}_j$ and $\bar{b}_j$ the column
  means. Let $\bar{a}$ and $\bar{b}$ denote the total matrix means.
\item Compute doubly centered matrices $(A_{k,l})_{k,l}$ and
  $(B_{k,l})_{k,l}$ with $A_{k,l}=a_{k,l}-\bar{a}^k -\bar{a}_l
  +\bar{a}$ and $B_{k,l}=b_{k,l}-\bar{b}^k -\bar{b}_l +\bar{b}$
\item The sample distance covariance is
  \begin{equation*}
    \dcov_n=\frac{1}{n^2}\sum_{k,l=1}^n A_{k,l} B_{k,l}
  \end{equation*}
\end{enumerate}

Note that the matrices $A$ and $B$ have the property that all rows and
columns sum to zero.

\subsection{Metric spaces of strong negative type}\label{sec:Negtype}

As straightforward application of the definition shows that the
distance correlation of a product measure is always zero. To see this,
observe that when $\theta$ is a product of $\theta_X$ and $\theta_Y$,
then
\begin{align*}
  \dcov(\theta) &= \int d_\mu(x,x')d_\nu(y,y')d\theta^2((x,y),(x',y')) \\
  &= \int d_\mu(x,x') d\theta^2_X(x,x') \int d_\nu(y,y')d\theta^2_Y(y,y').
\end{align*}
By construction of $d_\mu$ and $d_\nu$, we have $\int d_\mu(x,x')
d\theta^2_X(x,x') =0=\int d_\nu(y,y')d\theta^2_Y(y,y')$. The converse
of this statement holds under conditions on the metric spaces the
distributions are over (not the distributions themselves).

\begin{defn}
  A metric space $(X,d)$ has \emph{negative type} if for all $x_1,
  \ldots, x_n\in X$ and $\alpha_1, \ldots, \alpha_n\in\mathbb{R}$ with
  $\sum_i \alpha_i=0$
  \begin{equation}
    \sum_{i,j=1}^n \alpha_i\alpha_j d(x_i,x_j)\leq 0. \label{eq:negtype}
  \end{equation}
\end{defn}

%Note that the definition of negative type coincides with that of
%conditionally negative semidefinite. \fixme{Should we refer to
 % wherever this is defined?}

%It was shown in Schoenberg (1937, 1938) that an equivalent definition of negative type is the existence of a Hilbert space embedded $\phi:X\to \mathcal{H}$ with $\|\phi(x)-\phi(x')\|^2=d(x,x')$.

For spaces of negative type it is always true that the distance
covariance is non-negative~\cite{Lyons}. We have further nice
properties when the metric space is of strong negative type.

\begin{defn}
  A metric space has \emph{strict negative type} if it is a space of
  negative type where equality in~\eqref{eq:negtype} implies that the
  $\alpha_i$ are all zero. By extending to distributions of infinite
  support we get the definition of strong negative type: A metric
  space $(\mathcal{X}, d)$ has \emph{strong negative type} if it has
  negative type and for all probability measures $\mu_1, \mu_2$ we
  have
  \begin{equation*}
    \int d(x,x')d(\mu_1-\mu_2)^2(x,x')\leq 0.
  \end{equation*}
\end{defn}

Lyons~\cite{Lyons} used the notion to characterize the spaces where
one can test for independence of random variables using distance
correlation.

%\fixme{There needs to be more meat surrounding this theorem, it pops
  %out of nowhere.}

\begin{theorem}[\citeauthor{Lyons}~\cite{Lyons}, \citeyear{Lyons}]
Suppose that $\mathcal{X}$ and $\mathcal{Y}$ are separable metric spaces of strong negative type
and that $\theta$ is a probability measure on $\mathcal{X} \times
\mathcal{Y}$ whose marginals have finite first moment. If
$\dcov(\theta)=0$, then $\theta$ is a product measure.
\end{theorem}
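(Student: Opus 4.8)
The plan is to express the distance covariance $\dcov(\theta)$ as a double integral against a single signed measure, and then use the strong negative type hypothesis on the \emph{product} space (or rather on each factor separately) to force that signed measure to be trivial. First I would recall that the doubly centred distance functions admit the clean description
\begin{equation*}
  d_\mu(x,x') = \int d_{\mathcal{X}}(s,s')\,d(\delta_x-\mu)(s)\,d(\delta_{x'}-\mu)(s'),
\end{equation*}
and similarly for $d_\nu$. Substituting these into the definition of $\dcov(\theta)$ and using Fubini (justified by the finite first moment assumption), the quantity $\dcov(\theta)$ becomes an integral of $d_{\mathcal{X}}(s,s')\,d_{\mathcal{Y}}(t,t')$ against a product of signed measures built from $\theta$ and its marginals. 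The key algebraic point is that the ``recentring'' built into $d_\mu$ is exactly what is needed so that the relevant signed measure on $\mathcal{X}$ has total mass zero when one marginalises, and likewise on $\mathcal{Y}$.

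Next I would introduce the signed measure on $\mathcal{X}$ obtained, for each fixed pair $(t,t')$, by integrating out the $\mathcal{Y}$-coordinates; call a suitable version of this $\sigma$. The crucial observation is that $\dcov(\theta)=0$ together with the negative-type inequality applied in the $\mathcal{X}$-variable (strong negative type lets us pass from finite point configurations to arbitrary finite signed measures of total mass zero) forces, for $\theta\otimes\theta$-almost every $(t,t')$, that the associated signed measure on $\mathcal{X}$ must vanish. Equivalently, the conditional distribution of $X$ given $Y$ does not depend on $Y$; running the symmetric argument in the $\mathcal{Y}$-variable, or simply concluding from the $\mathcal{X}$-statement, yields that the conditional law of $X$ given $Y$ equals the marginal $\mu$ almost surely, which is precisely the statement that $\theta=\mu\otimes\nu$. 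Separability is used here to guarantee that regular conditional distributions exist and that the almost-everywhere statements can be upgraded to a genuine identity of measures on the product Borel $\sigma$-algebra.

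The main obstacle I anticipate is the measure-theoretic bookkeeping in the middle step: one must carefully set up the relevant signed measure so that it genuinely has total mass zero (so that the strong negative type hypothesis applies), handle the fact that strong negative type as stated is phrased for differences $\mu_1-\mu_2$ of probability measures rather than arbitrary zero-sum signed measures (so one needs a normalisation/Hahn-decomposition argument to reduce to that case), and control integrability throughout via the finite first moment assumption so that all the Fubini interchanges are legitimate. Once the signed measure is shown to be null for almost every pair of $\mathcal{Y}$-points, translating ``the conditional laws agree almost surely'' into ``$\theta$ is a product measure'' is standard, but it is exactly the place where separability of $\mathcal{Y}$ is indispensable.
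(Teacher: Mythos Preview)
The paper does not give a proof of this theorem at all: it is stated with attribution to Lyons and simply cited, so there is no ``paper's own proof'' to compare your attempt against.

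That said, a brief remark on your sketch versus what Lyons actually does. Your identity
\[
  d_\mu(x,x') = \int d_{\mathcal{X}}(s,s')\,d(\delta_x-\mu)(s)\,d(\delta_{x'}-\mu)(s')
\]
is correct and is indeed a useful way to think about the centring. However, Lyons's argument does not proceed via conditional distributions and a pointwise-in-$(t,t')$ vanishing of a signed measure on $\mathcal{X}$. Instead he uses the Hilbert-space characterisation of negative type: there is a map $\phi_{\mathcal X}:\mathcal{X}\to H_{\mathcal X}$ with $d_{\mathcal X}(x,x')=\|\phi_{\mathcal X}(x)-\phi_{\mathcal X}(x')\|^2$, and strong negative type is equivalent to injectivity of the barycentre map $\mu\mapsto\int\phi_{\mathcal X}\,d\mu$ on probability measures with finite first moment. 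One then computes that $\dcov(\theta)$ is, up to a positive constant, the squared Hilbert--Schmidt norm of
\[
  \int \bigl(\phi_{\mathcal X}(x)-\beta_\mu\bigr)\otimes\bigl(\phi_{\mathcal Y}(y)-\beta_\nu\bigr)\,d\theta(x,y)
\]
in $H_{\mathcal X}\otimes H_{\mathcal Y}$, so $\dcov(\theta)=0$ forces this tensor to vanish; testing against $\phi_{\mathcal Y}(y_0)-\beta_\nu$ and invoking injectivity on the $\mathcal{Y}$ side, then on the $\mathcal{X}$ side, yields $\theta=\mu\otimes\nu$.

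The gap in your outline is the step ``$\dcov(\theta)=0$ forces, for $\theta\otimes\theta$-almost every $(t,t')$, the associated signed measure on $\mathcal{X}$ to vanish.'' To pass from a vanishing integral to an almost-everywhere vanishing integrand you need the integrand to have a sign, and you have not established that: negative type in $\mathcal{X}$ alone gives you an inequality for each fixed pair $(x,x')$-measure, but the mixing with the $\mathcal{Y}$-variables destroys any obvious pointwise sign. The Hilbert embedding is precisely the device that repackages $\dcov$ as a squared norm and makes the nonnegativity manifest; without it (or an equivalent), your conditional-law argument does not get off the ground.
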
 

This means that given paired random variable $(X,Y)$ with joint
distribution $\theta$, we can test for independence by computing
$\dcov(\theta)$ and decide they are independent if $\dcov(\theta)=0$,
and not independent if $\dcov(\theta)>0$. The challenge is then how to
implement such a test given a sample distance correlation. We expect
the sample distance correlation to be non-zero even when the variables
are independent.

There is a range of spaces that are proven to be of strong negative
type, including all separable Hilbert spaces.

\begin{theorem}[\citeauthor{Lyons}~\cite{Lyons}, \citeyear{Lyons}] \label{thm:SepHilbert}
Every separable Hilbert space is of strong negative type. Moreover, if
$(X,d)$ has negative type, then $(X,d^r)$ has strong negative type
when $0 < r < 1$.
\end{theorem}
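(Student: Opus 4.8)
The plan is to reduce both assertions to a single statement: for a separable Hilbert space $\Hil$ and any exponent $s\in(0,2)$, the metric space $(\Hil,\|\cdot-\cdot\|^s)$ is of strong negative type. The first assertion is then the case $s=1$. For the ``moreover'', I would invoke Schoenberg's classical theorem: $(X,d)$ has negative type if and only if $(X,\sqrt d\,)$ embeds isometrically into a Hilbert space, equivalently $e^{-td}$ is positive definite for every $t>0$. Given that $(X,d)$ has negative type, this produces a map $\phi\colon X\to\Hil$ with $d(x,x')=\|\phi(x)-\phi(x')\|^2$, whence $d^r(x,x')=\|\phi(x)-\phi(x')\|^{2r}$; so $(X,d^r)$ embeds isometrically into $(\Hil,\|\cdot-\cdot\|^{2r})$ with $2r\in(0,2)$, and strong negative type passes to isometrically embedded subspaces (one transports probability measures along the embedding). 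Here I would take $\Hil$ to be the closed span of $\phi(X)$, which is separable whenever $X$ is, and I would note separately the elementary fact that $d^r$ is still a metric for $0<r<1$ because $t\mapsto t^r$ is increasing, concave and zero at $0$, hence subadditive.

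The negative-type half of the reduced statement follows by subordination. The squared norm $\|x-x'\|^2$ is conditionally negative definite, being literally $\|\id(x)-\id(x')\|^2$; plugging $u=\|x-x'\|^2$ into $u^{s/2}=c_s\int_0^\infty(1-e^{-\lambda u})\lambda^{-1-s/2}\,\dd\lambda$ and using Schoenberg (each $e^{-\lambda\|x-x'\|^2}$ is positive definite, so each $1-e^{-\lambda\|x-x'\|^2}$ is conditionally negative definite, and a positive mixture of such kernels is again one) shows $\|x-x'\|^s$ is conditionally negative definite, that is, $(\Hil,\|\cdot-\cdot\|^s)$ has negative type. The same computation with $d$ itself in place of $\|\cdot-\cdot\|^2$ shows $d^r$ is of negative type whenever $d$ is.

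The substance is the strong half: for probability measures $\mu\ne\nu$ on $\Hil$ of finite first moment (in the metric $\|\cdot-\cdot\|^s$), $\int\|x-x'\|^s\,\dd(\mu-\nu)^2(x,x')<0$. First I would treat $\R^n$. Using the representation $\|z\|^s=c_{n,s}\int_{\R^n}(1-\cos\langle w,z\rangle)\|w\|^{-n-s}\,\dd w$ — or, equivalently, the spherical average $\|z\|^s=c'_{n,s}\int_{S^{n-1}}|\langle\theta,z\rangle|^s\,\dd\sigma(\theta)$, which reduces everything to $n=1$ — and applying Fubini gives
\[
  \int\|x-x'\|^s\,\dd(\mu-\nu)^2(x,x')=-\,c_{n,s}\int_{\R^n}\frac{|\widehat\mu(w)-\widehat\nu(w)|^2}{\|w\|^{\,n+s}}\,\dd w ,
\]
with $\widehat\mu$ the characteristic function; this is $\le 0$ and vanishes only if $\widehat\mu\equiv\widehat\nu$, forcing $\mu=\nu$. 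Then I would pass to separable $\Hil$: a Borel probability measure there is determined by its finite-dimensional marginals, so $\mu\ne\nu$ forces $\pi_V\mu\ne\pi_V\nu$ for some finite-dimensional subspace $V$; along an increasing exhaustion $V_1\subset V_2\subset\cdots$, $\int\|x-x'\|^s\,\dd(\mu-\nu)^2$ is the monotone-convergence limit of the corresponding quantities for the projected measures, each $\le 0$ by the Euclidean case, and the monotonicity of $s$-energy under orthogonal projection, $\int\|\pi_{V_m}x-\pi_{V_m}x'\|^s\,\dd(\mu-\nu)^2\le\int\|x-x'\|^s\,\dd(\mu-\nu)^2$, promotes the eventual strict inequality for the projections to a strict inequality for $\mu,\nu$.

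The hard part is this equality case — the assertion that the distance kernel is \emph{characteristic}. In finite dimensions the difficulty is making the Fourier identity rigorous: the representation of $\|z\|^s$ has a non-integrable singularity at the origin and only slow decay at infinity, so Fubini has to be set up carefully, exploiting that $|\widehat\mu(w)-\widehat\nu(w)|^2$ vanishes to second order at the origin (finite first moment) and is bounded. In infinite dimensions the real obstacle is the reduction to finite dimensions: proving that $s$-energy cannot increase under an orthogonal projection — so that strict positivity is not lost in the limit — and it is precisely at this step that separability of $\Hil$ is essential. Both pieces are worked out in \cite{Lyons} (extending \cite{SRB} from the Euclidean setting), and that is the treatment I would follow.
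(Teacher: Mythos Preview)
The paper does not prove this theorem; it is quoted from \cite{Lyons} without argument. So there is no in-paper proof to compare against, and I assess your sketch on its own terms.

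Your reduction via Schoenberg to the single claim that $(\Hil,\|\cdot-\cdot\|^{s})$ has strong negative type for $0<s<2$ is the right framing, and the finite-dimensional case via the characteristic-function identity is exactly the Sz\'ekely--Rizzo--Bakirov computation. The gap is the passage to infinite dimensions. The inequality you call ``monotonicity of $s$-energy under orthogonal projection'',
\[
  \int\|\pi_{V}(x-x')\|^{s}\,\dd(\mu-\nu)^{2}\ \le\ \int\|x-x'\|^{s}\,\dd(\mu-\nu)^{2},
\]
is false. With $\Hil=\R^{2}$, $V=\R\times\{0\}$, $s=1$, and $\mu=\delta_{(0,0)}$, $\nu=\delta_{(1,-1)}$, the left side is $-2$ and the right side is $-2\sqrt2$. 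The reverse inequality also fails: with $\mu=\tfrac12(\delta_{(0,0)}+\delta_{(0,2)})$ and $\nu=\tfrac12(\delta_{(-1,1)}+\delta_{(1,1)})$ the left side is $-1$ and the right side is $2-2\sqrt2$. So no monotonicity in either direction is available; the integrand $\|\pi_{V_m}(x-x')\|^{s}$ is pointwise monotone in $m$, but it is integrated against the \emph{signed} measure $(\mu-\nu)^{2}$, and that does not transfer. Even granting your stated direction the logic breaks: from $I_{V_m}<0$ and $I_{V_m}\le I_{\Hil}$ you only obtain $I_{\Hil}\ge I_{V_m}$, which is compatible with $I_{\Hil}=0$.

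Lyons's argument bypasses any projection-limit step. He proves that $(X,\rho)$ has strong negative type if and only if the barycenter map $\mu\mapsto\int\phi\,\dd\mu$ associated with the Schoenberg embedding $\phi\colon(X,\sqrt\rho)\hookrightarrow H$ is injective on probability measures of finite first moment, and then verifies injectivity directly for the embeddings arising from $\|\cdot-\cdot\|$ on a Hilbert space and from $d^{r}$ via the subordination formula. This also explains why no separability of $X$ is needed for the ``moreover'' clause; your route requires it to get a separable $\Hil$, but the hypothesis does not supply it.
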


A list of metric spaces of negative type appears as Theorem 3.6
of~\cite{Meckes}; in particular, this includes all $L^p$ spaces for $1
\leq p \leq 2$. On the other hand, $\R^n$ with the $l^p$-metric is not
of negative type whenever $3 < n <\infty$ and $2 < p < \infty$.

%The distance correlation is non-negative for spaces of negative type.

The distance correlation still contains useful information even when
the spaces are not of strong negative type. It is just more powerful
as a test statistic when the spaces are of strong negative type. This
is analogous to how the Pearson correlation coefficient still can be
evidence of a relationship between two variables even when the joint
distribution is not Gaussian. Here the Pearson correlation coefficient
is detecting linear relationships. It is an open problem to
characterise which relationships are, and which are not, detected by
the distance correlation in spaces that are not of strong negative
type.

Distance correlation lends itself to non-parametric methods. One
possibility is to combine it with permutation tests to construct
$p$-values for independence. Permutation tests construct a sampling
distribution by resampling the observed data. We can permute the
observed data without replacement to create a null distribution (in
this case a distribution of distance correlation values under the
assumption that the random variables are independent). The use and
exploration of permutation tests in relation to distance correlation
is beyond the scope of this paper. We direct the interested reader to
section~\ref{sec:future} for more details.

\section{A veritable zoo of topological summaries, some of which are of strong negative type}

Persistent homology has become a very important tool in TDA. Certainly
there are many choices that are made in any persistent homology
analysis, with much of the focus being on the filtration. In this
paper we want to highlight another choice, namely the metric space
structure to put on the topological summary of choice. Examples
include persistence diagrams with bottleneck and Wasserstein
distances, persistence landscapes or rank function with an $L^p$
distance, or one of the many kernel representations. The choice of
which topological summary we use to represent persistent homology, and
the choice of metric on this space of topological summaries, will
affect any statistical analysis and will influence whether or not the
summary captures the information that is of relevance to the
application.

For spaces of strong negative type, distance correlation is known to
have the additional nice properties. As a rule, functional spaces with
an $L^2$ metric and those lying in a reproducing kernel Hilbert space
are of strongly negative type. This implies that the Euler
characteristic and Betti curves with the $L^2$ metric are of strong
negative type, and that the space of persistence scale shape kernels
is of strong negative type. In this section we will characterise which
of the spaces of persistence landscapes are of strong negative type
and show that the space of persistence diagrams is never of strong
negative type. The main results are as follows.

\begin{theorem*}[Theorem \ref{thm:PD}]
	The space of persistence diagrams is not of negative type under the bottleneck metric or under any of the Wasserstein metrics.
\end{theorem*}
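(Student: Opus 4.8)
The strategy is to exhibit, for each of the metrics in question (bottleneck and $p$-Wasserstein for $p \in \{1,2,\infty\}$), a finite configuration of persistence diagrams $D_1,\dots,D_n$ and real weights $\alpha_1,\dots,\alpha_n$ with $\sum_i \alpha_i = 0$ for which the quadratic form $\sum_{i,j}\alpha_i\alpha_j\, d(D_i,D_j)$ is strictly positive, directly violating \eqref{eq:negtype}. Since negative type is inherited by subspaces, it suffices to work inside a conveniently small family of diagrams — for instance diagrams consisting of a single off-diagonal point, or of one or two points — so that the relevant distances become elementary functions of a few real parameters that one can write down explicitly.

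First I would recall that on diagrams with a single point, all of these metrics reduce (up to the usual $\ell^p$/$\ell^\infty$ choice on $\R^2$ and the possibility of matching to the diagonal) to very concrete formulas, so the obstruction to negative type should already be visible at small $n$. The natural candidate is the classical fact that a metric space contains an isometric copy of a configuration obstructing negative type as soon as it contains, roughly speaking, a "star" or "tripod" that is too spread out: one computes the $3\times 3$ or $4\times 4$ Cauchy–Menger-type determinant and finds the wrong sign. Concretely, I would look for four diagrams whose pairwise distances realise a metric that is known not to embed into $L^2$ — e.g. a path metric or a four-point metric with two "long" diagonals — and then take $\alpha = (1,1,-1,-1)$ (or a similar balanced choice) to make $\sum_{i,j}\alpha_i\alpha_j d(D_i,D_j) > 0$.

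The key steps, in order, are: (1) fix a small parametrised family of diagrams (say $D_i$ = single point at position $p_i \in \R^2$, possibly together with the empty diagram); (2) write down the bottleneck and Wasserstein distances on this family explicitly, noting that matching to the diagonal caps each distance and that for well-separated points the optimal matching is forced; (3) choose the $p_i$ and the weights $\alpha_i$ so that the resulting distance matrix, when plugged into \eqref{eq:negtype}, gives a strictly positive value — and verify this by a direct finite computation for each of the four metrics; (4) invoke closure of negative type under subspaces to conclude the full space of persistence diagrams fails \eqref{eq:negtype}, hence is not of negative type, hence (a fortiori) not of strong negative type.

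The main obstacle will be step (3): the distances on diagrams are piecewise-linear in the point coordinates because of the diagonal-matching option and the combinatorial choice of optimal matching, so the quadratic form $\sum_{i,j}\alpha_i\alpha_j d(D_i,D_j)$ is not a single smooth expression but a max/min over matchings. I would handle this by choosing the configuration so that the optimal matchings are unambiguous (points either very close to the diagonal, so they necessarily match to it, or clustered in pairs far from the diagonal and from each other, so they necessarily match to each other), collapsing each metric to a clean closed form on the chosen configuration; a single well-chosen example configuration can then be checked to violate negative type simultaneously for bottleneck and all three Wasserstein $p$. A secondary point to be careful about is $p=\infty$ versus finite $p$ and the $\ell^p$ norm on $\R^2$ inside each diagram — but on a configuration of isolated single-point diagrams these differences amount only to rescaling a handful of entries, and the sign of the quadratic form is robust to that.
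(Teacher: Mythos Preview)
Your overall architecture is right --- exhibit finitely many diagrams and weights violating \eqref{eq:negtype} --- but two of your key simplifying claims are false, and without them the plan does not go through.

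First, single-point diagrams are not enough. If your points are placed far from the diagonal so that the optimal matching is always point-to-point (as you propose in order to make the distances ``clean''), then $d_p(D_i,D_j)=\|x_i-x_j\|_p$ and you are working inside an isometric copy of $(\R^2,\ell^p)$. But $(\R^2,\ell^p)$ is of negative type for every $p\in[1,\infty]$ (for $1\le p\le 2$ this is standard, and $(\R^2,\ell^\infty)$ is isometric to $(\R^2,\ell^1)$ by a rotation), so no choice of $\alpha$'s will make the sum positive there. You genuinely need the combinatorics of multi-point diagrams and competing matchings; the paper uses diagrams with four off-diagonal points and a sizeable collection of them (sixteen diagrams per group).

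Second, and more seriously, the assertion that ``the sign of the quadratic form is robust'' to changing $p$ is wrong. Changing $p$ does not rescale all entries of the distance matrix by a common factor; it rescales different entries differently (e.g.\ $2^{1/p}$ versus $4^{1/p}$), and since $\sum_{i,j}\alpha_i\alpha_j d(D_i,D_j)$ is a signed combination of these entries its sign can and does flip as $p$ varies. Concretely, the paper's first counterexample yields $48\cdot 4^{1/p}-64\cdot 2^{1/p}$, which is positive only for $p<\ln 2/\ln(4/3)\approx 2.41$; a second, structurally different configuration is then needed to handle $p\ge 2.4$. So you should abandon the hope of a single universal example and instead produce (at least) two families of diagrams, one covering small $p$ and one covering large $p$ including $p=\infty$, with an explicit verification of the crossover.
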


\begin{theorem*}[Theorem \ref{thm:Landscapes}]
	\begin{enumerate}
		\item[(a)] The space of persistence landscapes with the $L^2$ norm is of strong negative type.
		\item[(b)] The space of persistence landscapes with the $L^p$ norm is of negative type when $1\leq p\leq 2$ 
		\item[(c)] The space of persistence landscapes with the $L^1$ norm is not of strong negative type, even when restricting to persistence landscapes that arise from persistence diagrams.
		\item[(d)] The space of persistence landscapes with the $L^\infty$ norm is not of negative type, even when restricting to persistence landscapes that arise from persistence diagrams.
	\end{enumerate}
\end{theorem*}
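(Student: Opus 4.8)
The plan is to treat the four parts separately; (a) and (b) are immediate consequences of the structure theory already quoted, while (c) and (d) need explicit counterexamples, which I would build from honest (in fact finite) persistence diagrams. For (a) and (b): a persistence landscape is a sequence $\lambda=(\lambda_1,\lambda_2,\dots)$ of functions on $\R$, and the space of landscapes with finite $L^p$ norm embeds, as a metric subspace, isometrically into $L^p(\N\times\R)$. Both negative type and strong negative type are hereditary — the inequality~\eqref{eq:negtype} involves only finitely many points of the subspace, and the strong-negative-type condition involves only probability measures supported on the subspace, which are probability measures on the ambient space as well. Since $L^2(\N\times\R)$ is a separable Hilbert space it is of strong negative type by Theorem~\ref{thm:SepHilbert}, giving (a); and $L^p(\N\times\R)$ is of negative type for $1\le p\le 2$ by the result of Meckes~\cite{Meckes} cited above, giving (b). So here I would do little beyond identifying the ambient space.

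\textbf{Part (c).} I would exhibit four persistence diagrams whose landscapes form a ``rectangle'' in the $L^1$ metric. Take one-point diagrams $\{p\}$ and $\{q\}$ whose tent functions $P,Q$ have disjoint support, and set $a=\|P\|_1$, $b=\|Q\|_1$. The landscapes of $\emptyset,\{p\},\{q\},\{p,q\}$ are $0$, $(P)$, $(Q)$, $(P+Q)$ — all higher layers vanish because the supports are disjoint — and a short computation of the six pairwise $L^1$ distances shows this configuration is isometric to $\{0,a\}\times\{0,b\}$ with the $\ell^1$ product metric. That is a subset of $\ell^1$, hence of negative type, but equality holds in~\eqref{eq:negtype} at $\alpha=(1,-1,-1,1)$; equivalently, the distinct probability measures $\mu_1=\tfrac12(\delta_{\lambda(\emptyset)}+\delta_{\lambda(\{p,q\})})$ and $\mu_2=\tfrac12(\delta_{\lambda(\{p\})}+\delta_{\lambda(\{q\})})$ satisfy $\int d(x,x')\,d(\mu_1-\mu_2)^2(x,x')=0$, contradicting strong negative type. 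Since these landscapes come from finite diagrams, (c) follows; combined with (b) it says precisely that the $L^1$ landscape space has negative but not strong negative type.

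\textbf{Part (d).} The strategy is to realize inside the $L^\infty$ landscape space an isometric copy of a finite metric space that fails negative type. A convenient choice is the graph metric on $K_{2,3}$: with the two colour classes of sizes $2$ and $3$ and the weighting $\alpha=(-3,-3,2,2,2)$ one gets $\sum_{i,j}\alpha_i\alpha_j d(x_i,x_j)=12>0$, so $K_{2,3}$ is not of negative type. Scaled so that its distances are $1$ and $2$, the Kuratowski embedding places $K_{2,3}$ isometrically in $\ell^\infty_5$ with all coordinates in $\{0,1,2\}$. I would reproduce this $\ell^\infty$-product structure using five disjoint ``time windows'': within a given window, the coordinate values $0,1,2$ are represented by the empty diagram, by a one-point diagram whose tent has height $1$, and by a one-point diagram whose tent has height $2$ with \emph{the same peak}; these three landscapes have pairwise $L^\infty$ distances $1,1,2$, matching $\{0,1,2\}\subset\R$. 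Because tents in different windows have disjoint support, the landscape of a union of such one-window diagrams is their ``concatenation'', and its $L^\infty$ distance to another such landscape is the maximum of the per-window distances. Hence the five diagrams built this way have landscapes whose pairwise $L^\infty$ distances are exactly the $K_{2,3}$ metric; these are finite diagrams, so (d) follows.

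The main obstacle is part (d). The rigidity of persistence landscapes — non-negativity, slopes $\pm1$, the sorting across layers — makes it delicate to realize a prescribed metric, and since no four-point metric space fails negative type one is forced up to a genuinely five-point configuration, arranged so that the landscape $L^\infty$ distances are \emph{exactly}, not merely approximately, the target values. The disjoint-window and nested-tent device is what makes this tractable; checking that it really gives an isometry, and that no spurious higher landscape layers are created, is the part that needs the most care.
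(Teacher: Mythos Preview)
Your proposal is correct. Parts (a) and (b) are exactly the paper's argument: identify the landscape space as a metric subspace of an $L^p$ space and invoke Theorem~\ref{thm:SepHilbert} and Meckes' result.

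For (c) and (d) you and the paper both build explicit finite configurations of landscapes coming from disjoint-bar barcodes, but the concrete choices differ. For (c) the paper takes four two-bar barcodes $X_1,X_2,Y_1,Y_2$ whose first landscapes are translates of one another, with $d_1(\PL(X_i),\PL(X_j))=d_1(\PL(Y_i),\PL(Y_j))=2$ and all cross-distances equal to $1$; your $\ell^1$ ``rectangle'' $\emptyset,\{p\},\{q\},\{p,q\}$ is a tidier, more conceptual realisation of the same phenomenon (equality in~\eqref{eq:negtype} with nonzero weights) and has the mild advantage that it explains \emph{why} such a configuration exists in any $L^1$-type space. For (d) the paper writes down six explicit four-bar barcodes $X_1,X_2,X_3,Y_1,Y_2,Y_3$ realising a scaled $K_{3,3}$ pattern ($d_\infty=1$ within groups, $d_\infty=\tfrac12$ across), whereas you realise the $K_{2,3}$ graph metric via its Kuratowski embedding into $\ell^\infty_5$ and then implement each coordinate by a nested tent of height $0,1,2$ in its own window. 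Your route is more systematic --- it tells the reader how to manufacture further counterexamples --- while the paper's is shorter to verify because the six barcodes are given outright and one only needs to read off the pairwise sup-distances. Either argument suffices; the disjoint-window/nested-tent device you describe is exactly what makes the $L^\infty$ case tractable, and your observation that no four-point space can fail negative type is a nice justification for why at least five diagrams are needed.
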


It is an open question as to whether the sliced Wasserstein metric is of strong negative type; if it is separable then it will be.

%How all the function spaces with $p=1$ and $p=2$ are of strong negative type. (With token construction outline of the embedding)

%``It was proved by Dor [4] that if $2 < p \leq \infty$ then $l_3^p$ does not embed isometrically in $L_1$, and is therefore not positive definite. In particular, $L_p$ is not positive definite for any $p > 2$.''

%Topological summaries in the form of curves have been studied for a long time. In particular, the Betti curves and the Euler characteristic curves have been ....
%There has been an increasing amount of interest in functional topological summaries that are built using all of the persistent homology information. Much of the interest stems from the ease of applying standard techniques from statistics and computer science. 

%Since  every separable Hilbert space is of strong negative type. Thus the $L^2$ versions of metric spaces for landscapes, rank functions and heat maps are all of strong negative type.

%The $L^1$ versions are of negative type but not of strong negative type ....
%However, $L^p$ versions are for all $1<p<2$. We could also consider the $d_1^r$ for any $r\in[0,1]$ which will be of strong negative type.

%Functional topological summaries using $L^\infty$ metrics are not of negative type. We will be focusing on the $p=1$ and $p=2$ cases

\subsection{Betti and Euler characteristic curves}
Some of the first topological summaries often considered for
parameterised families of topological spaces ($\{K_a\}$) are the Betti
and the Euler characteristic curves, which we denote by
$\beta_k:\mathbb{R}\to\mathbb{N}_0$ and
$\chi:\mathbb{R}\to\mathbb{Z}$. These are integer valued functions
with $\beta_k(a)=\dim H_k(K_a)$ and $\chi(a)=\chi(K_a)$. From the
point of view of barcodes, one thinks of $\beta_k(a)$ as the number of
bars that contain the point $a$. The Euler curve is then the
alternating sum of the Betti curves, $\chi(a)=\sum_{k=0}^\infty (-1)^k
\beta_k(a)$, as one would expect.

Clearly the Betti and Euler curves contain less information than the
persistence diagrams; in particular, the Betti curves can be thought
of as encoding point-wise homological information without considering the induced
maps $H_k(a)\to H_k(b)$.

Since $\beta_k$ and $\chi$ are functions, we can consider functional
distances between them. In this paper we consider both $L^1$ and $L^2$
distances. Since $L^2(\R)$ is a separable Hilbert space, it is of
strong negative type. In comparison $L^1(\R)$ is of negative type, but
not of strict negative type (see~\cite{Lyons}). For an explicit
counterexample, the reader can modify the one used for the $p=1$ case
in section~\ref{sec:landscapes}.

\subsection{Persistence diagrams}
Persistence diagrams are arguably the most common way of representing
persistent homology. A persistence diagrams is a multiset of points
above the diagonal in the real plane, with lines at $\pm\infty$ in the
second coordinate. 
%\fixme{I removed the stuff about finitely many
  %infinite bars and the sum of all finite lifetimes is finite; we
  %don't need it, and we save some explaining this way, right?} Much of
%what follows holds also under more relaxed conditions relating to
%finiteness or tameness.

%% For tractability, we will impose some finiteness
%% conditions, namely that only finitely many classes have infinite
%% lifetimes and that the sum of all the finite lifetimes is finite.
%This restriction is not onerous in applications where generally we
%have finite sized data as input.
%Notably under all of these metrics $\L_\infty \cup \L_{-\infty}\cup \R^{2+}$ has connected components $\L_\infty$, $\L_{-\infty}$ and $\R^{2+}$.
In what follows, let $\R^{2+}=\{(x,y)\in \R^2\mid x<y\}$ be the subset
of the plane above the diagonal $\Delta=\{(x,x) \mid x \in\R \}$, and
let $\L_{\pm\infty} = \{(x,\pm\infty) \mid x\in\R\}$ denote horizontal
lines at infinity.

\begin{defn}
A \emph{persistence diagram} $X$ is a multiset in $\L_\infty \cup
\L_{-\infty}\cup \R^{2+}\cup \Delta$ such that
\begin{itemize}
\item The number of elements in $X|_{\L_{\infty}}$ and $X|_{\L_{-\infty}}$ are finite
\item $\sum_{(x_i,y_i)\in X\cap{\R^{2+}}}(y_i-x_i)<\infty$ 
  %\fixme{Whyshould we be this technical when we're restricting everything to the finite case anyway?}
%\item There are countably infinite copies of an abstract element representing the diagonal in the plane which we denote by $\Delta$ 
  %\fixme{This is superfluous given what's already written above.}
\item $X$ contains countably infinite copies of $\Delta$.
\end{itemize}
\end{defn}

For our purposes, it suffices to consider persistence diagrams with
only finitely many off-diagonal points.

Let $\D$ denote the set of all persistence diagrams. We will consider
a family of metrics which are analogous to the $p$-Wasserstein
distances on the set of probability measures, and to the $L^p$
distances on the set of functions on a discrete set. $\R^{2+}$
inherits natural $L^p$ distances from $\R^2$. For $p\in [1,\infty)$ we
  have $\|(a_1, b_1) - (a_2, b_2)\|_p^p = |a_1-a_2|^p + |b_1-b_2|^p$
  and $\|(a_1, b_1) -(a_2, b_2)\|_\infty = \max \{ |a_1-a_2|, |b_1-
  b_2|\}$.

With a slight abuse of notation we write $\|(a, b) - \Delta\|_p $ to
denote the shortest $L^p$ distance to $\Delta$ from a point $(a,b)$ in
a persistence diagram. Thus
\begin{equation*}
  \|(a, b) - \Delta\|_p = \inf_{t\in \R}\|(a,b)-(t,t)\|_p= 2^{\frac{1}{p}-1}|b-a|
\end{equation*}
for $p<\infty$, and $\|(a, b)-\Delta\|_\infty=\inf_{t\in
  \R}\|(a,b)-(t,t)\|_\infty = |y-x|/2$. Both $ \L_{-\infty}$ and
$\L_\infty$ inherit natural $L^p$ distances from the $L^p$ metric on
$\R$, \ie/ $\|(-\infty, b_1)-(-\infty, b_2)\|_p=|b_1-b_2|$ and
$\|(a_1,\infty)-(a_2,\infty)\|_p=|a_1-a_2|$.

Given persistence diagrams $X$ and $Y$, we can consider all the
bijections between them. This set is non-empty due to the presence of
$\Delta$ in the diagrams. Each bijection can be thought of as
providing a transport plan from $X$ to $Y$. One defines a family of
metrics in terms of the cost of the most efficient transport plan.

For each $p \in [1, \infty)$, define
\begin{equation*}
  d_p(X,Y) = \left( \inf_{\substack{\phi:X \to Y\\\text{bijection}}}\, \sum_{x\in X} \|x-\phi(x)\|_p^p\right)^{1/p}
\end{equation*}
and
\begin{equation*}
  d_\infty(X,Y) = \inf_{\substack{\phi:X \to Y\\\text{bijection}}}\, \sup_{x\in X} \|x-\phi(x)\|_\infty. 
\end{equation*}
These distances may be infinite. Indeed, if $X$ and $Y$ contain a
different number of points in $\mathcal{L}_\infty$, then
$d_p(X,Y)=\infty$ for all $p$.

In theory, for every pair $p,q\in [1,\infty]$ one can construct a
distance function of the form
\begin{equation*}
  \inf_{\phi:X \to Y} \left(\sum_{x\in X} \|x-\phi(x)\|_q^p\right)^{1/p}
\end{equation*}
with $p$ and $q$ potentially different. Some of the computational
topology literature uses a family of metrics $d_{W_p}$ where $p$
varies but $q=\infty$ is fixed.
%This choice is mainly for historical reasons. 
%The first and most prominent metric on the space of diagrams is the bottleneck distances  $d_\infty(X, Y)=\inf_{\phi} \max_{x\in X}\|x-\phi(x)\|_\infty$ and the first generalizations retained the $q= \infty$ rather than making $q=p$. 
The families $\{d_p\}$ and $\{d_{W_p}\}$ share many properties. The
metrics $d_p$ and $d_{W_p}$ are bi-Lipschitz equivalent, as for any
$x,y \in \R^2$ we have $\|x-y \|_\infty \leq \|x -y \|_p \leq 2 \|x -
y \|_\infty$, implying $d_{W_p}(X,Y)\leq d_{p}(X,Y) \leq 2
d_{W_p}(X,Y)$. Any stability results (\ie/ results pertaining to the
change in persistence diagrams due to perturbations of the underlying
filtration) for $\{d_p\}$ or $\{d_{W_p}\}$ extend (with minor changes
in the constants involved) to stability results for the other.

We feel that the choice of $q=p$ is cleaner in theory and in
practice. The coordinates of the points within a persistence diagram
have particular meanings; one is the birth time and one is the death
time. They are often locally independent (even though not globally
so). For example, if we have generated our persistence diagram from
the distance function to a point cloud, then each persistence class
has its birth and death time locally determined by the location of two
pairs of points, which are often distinct. Whenever these pairs are
distinct, moving any of these four points will change either the birth
or the death but not both. The distinctness of the treatment of birth
and death times as separate qualities may seem more philosophically
pleasing to the reader in the setting of barcodes.

Unfortunately, the geometry of the space of persistence diagrams is
complicated and statistical methods not easy to apply. For example,
there are challenges even in computing the mean or median of finite
samples (see~\cite{mean,median}). Given this it is perhaps not
surprising that the space of persistence diagrams is not of negative
type (let alone of strong negative type) under the bottleneck or
indeed any of the Wasserstein metrics. Although this has been
indirectly mentioned or suggested before (notably
in~\cite{Heat,Sliced}), we include here explicit counterexamples.

\begin{thm}\label{thm:PD}
	The space of persistence diagrams is not of negative type under the bottleneck or any of the Wasserstein metrics.
\end{thm}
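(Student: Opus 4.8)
The plan is to exhibit, for each of the metrics in question, a finite configuration of persistence diagrams $X_1,\dots,X_n$ together with real weights $\alpha_1,\dots,\alpha_n$ summing to zero for which the quadratic form $\sum_{i,j}\alpha_i\alpha_j d(X_i,X_j)$ is strictly positive, thereby violating \eqref{eq:negtype}. Since negative type is inherited by subspaces, it suffices to work inside a conveniently chosen slice of $\D$ where the Wasserstein (or bottleneck) distance simplifies to an explicit formula; the natural choice is to use diagrams each consisting of a single off-diagonal point (plus the obligatory copies of $\Delta$), so that the transport-plan infimum is realised either by matching the two off-diagonal points to each other or by sending each to the diagonal.

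First I would reduce to a one-parameter family: fix a line of points $p(t)=(0,t)\in\R^{2+}$ for $t>0$ and let $X_t$ be the diagram whose only off-diagonal point is $p(t)$. Then for the Wasserstein distance $d_p$ one computes $d_p(X_s,X_t) = \min\bigl\{|s-t|,\ (2^{1-p})^{1/p}(s+t)\bigr\}^{?}$ — more precisely the cost is $\min\{\,|s-t|^p,\ \|p(s)-\Delta\|_p^p + \|p(t)-\Delta\|_p^p\,\}^{1/p}$, and by taking the $t$-values close together one lands in the regime where $d_p(X_s,X_t)=|s-t|$, i.e.\ the family $\{X_t\}$ is isometric to an interval of $\R$. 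That alone is of negative type, so the obstruction must come from the diagonal. The trick is therefore to add the empty diagram $X_\varnothing$ (only $\Delta$), at distance $\|p(t)-\Delta\|_p = 2^{1/p-1}t$ from $X_t$: now the finite metric space $\{X_\varnothing, X_{t_1},\dots,X_{t_n}\}$ looks like an interval together with one extra point whose distance to $X_t$ is an \emph{affine} function of $t$ rather than $|0-t|$. A metric space consisting of $\{0,t_1,\dots,t_n\}\subset\R$ with the usual metric is of negative type, but replacing the distance from the special point $0$ to $t_i$ by $c\,t_i$ with $c<1$ destroys this, and a small explicit $3$- or $4$-point sub-configuration with suitable $\pm$ weights makes $\sum\alpha_i\alpha_j d>0$. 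I would verify this with the standard reformulation: negative type of a finite space is equivalent to the centred distance matrix being negative semidefinite, so I just need one negative-to-positive eigenvalue flip, which a $3\times 3$ determinant check settles.

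For the bottleneck metric $d_\infty$ the same family works even more transparently, because $d_\infty(X_s,X_t)=\min\{|s-t|, \max(s,t)/2\}$ and $d_\infty(X_\varnothing,X_t)=t/2$; choosing the $t_i$ spread out enough that all the pairwise matchings go "to the diagonal" turns the space into a scaled ultrametric-like star, and stars with these radii are easily seen not to be of negative type via the same eigenvalue computation. One then checks that the chosen counterexample for $d_\infty$ and for each $d_p$, $p\in[1,\infty)$ (and, if desired, for the $d_{W_p}$ variants, which are bi-Lipschitz but whose failure of negative type needs its own witness since negative type is not a bi-Lipschitz invariant), each uses only diagrams with finitely many off-diagonal points, as the statement of the theorem implicitly requires.

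The main obstacle I anticipate is \textbf{book-keeping across the whole family of metrics simultaneously}: a single configuration might fail negative type for $d_1$ but accidentally be "too symmetric" and lie on the boundary for some other $p$, so I would either (i) find one robust configuration and check the sign of the relevant determinant as a function of $p$, exhibiting it is bounded away from zero uniformly on $[1,\infty]$, or (ii) give a short separate three-point example per metric. Option (i) is cleaner but requires controlling the threshold between the "diagonal-matching" and "direct-matching" regimes as $p$ varies — the exponents $2^{1/p-1}$ move the break-even point — so some care is needed to pick $t_i$ that stay in the diagonal-matching regime for every $p$. Once that uniform choice is pinned down, the positivity of $\sum_{i,j}\alpha_i\alpha_j d(X_i,X_j)$ is a finite, explicit computation.
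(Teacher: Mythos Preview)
Your approach is genuinely different from the paper's, which builds counterexamples out of diagrams with \emph{four} off-diagonal points arranged at the corners of well-separated unit squares, and then splits the argument into two regimes ($p<\ln 2/\ln(4/3)$ and $p\ge 2.4$), computing the weighted sum of distances combinatorially in each. Your idea of using single-point diagrams together with the empty diagram is much more economical, but as written it does not work, for three concrete reasons.

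First, for $p=1$ the normalising constant is $2^{1/p-1}=1$, so $d_1(X_\varnothing,X_t)=t$ exactly, and $d_1(X_s,X_t)=\min\{|s-t|,\,s+t\}=|s-t|$ for all $s,t>0$. Thus your entire family $\{X_\varnothing\}\cup\{X_t:t>0\}$ is isometric to $[0,\infty)\subset\R$ and is of negative type; no choice of weights can witness a failure. Second, for the bottleneck metric your ``spread out'' regime, where every pair matches through the diagonal, gives $d_\infty(X_{t_i},X_{t_j})=\max(t_i,t_j)/2$ and $d_\infty(X_\varnothing,X_{t_i})=t_i/2$; this is an ultrametric, and ultrametric spaces (being isometrically embeddable in real trees, hence in $L^1$) are always of negative type, contrary to your claim that ``stars with these radii are easily seen not to be of negative type''. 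Third, the fallback to ``a short separate three-point example per metric'' cannot succeed either: every three-point metric space is of negative type (one checks directly that the associated $2\times2$ quadratic form in the weights has nonpositive determinant by the triangle inequality).

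So the proposal needs more than book-keeping: the single-off-diagonal-point slice is too tame at both endpoints $p=1$ and $p=\infty$, and the obstruction to negative type genuinely lives in the interaction between \emph{several} off-diagonal points and the competing transport plans among them. That is exactly what the paper's square-corner constructions exploit.
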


\begin{proof}

We will construct two different counterexamples; one for small $p$ and
one for large $p$. Note that the bottleneck metric is the Wasserstein
metric with $p=\infty$.

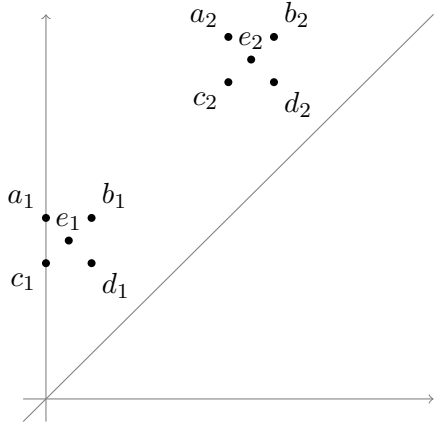
\begin{figure}[htbp]
\begin{center}
\begin{tikzpicture}[scale=.3]
\draw[gray, ->] (0,-1)--(0,17);
\draw[gray, ->](-1,0)--(17,0);
\draw[gray](-1,-1)--(17,17);

\fill[] (0, 6) circle (5pt) node[below left] {$c_1$};
\fill[] (2, 6) circle (5pt) node[below right] {$d_1$};
\fill[] (0, 8) circle (5pt) node[above left] {$a_1$};
\fill[] (2, 8) circle (5pt) node[above right] {$b_1$};
\fill[](1, 7) circle(5pt) node[above] {$e_1$};
\fill[] (8, 14) circle (5pt) node[below left] {$c_2$};
\fill[] (10, 14) circle (5pt) node[below right] {$d_2$};
\fill[] (8, 16) circle (5pt) node[above left] {$a_2$};
\fill[] (10, 16) circle (5pt) node[above right] {$b_2$};
\fill[](9, 15) circle(5pt) node[above] {$e_2$};

\end{tikzpicture}
\caption{The off-diagonal points used in the persistence diagrams in
  the counterexample for $p\leq 2.4$.} \label{fig:ptsexample}
\end{center}
\end{figure}

For small $p$, consider the two separate unit squares formed by the
points $a_1,b_1,c_1,d_1$ and $a_2,b_2,c_2,d_2$ in
Figure~\ref{fig:ptsexample}. Each persistence diagram will be a union
of a pair of corners sharing an edge in one of the squares, together
with a pair of corners diagonally opposite on the other square. We
then choose the weights (the $\alpha$'s in
inequality~\eqref{eq:negtype}) to be $1$ if the off-diagonal points
are diagonally opposite in the rightmost square, and $-1$ if they are
diagonally opposite in the leftmost square. A list of the diagrams is
in Table~\ref{tab:smallp}.
\begin{table}[htbp]
  \begin{center}
    \begin{tabular}{c|c|c}
      Diagram & Off-diagonal points & Weight\\
      \hline
      $x_1$ & $\{a_1,b_1, a_2,d_2\}$ & $1$\\
      $x_2$ & $\{a_1,c_1, a_2,d_2\}$ & $1$\\
      $x_3$ & $\{b_1,d_1, a_2,d_2\}$ & $1$\\
      $x_4$ & $\{c_1,d_1, a_2,d_2\}$ & $1$\\
      $x_5$ & $\{a_1,b_1, b_2,c_2\}$ & $1$\\
      $x_6$ & $\{a_1,c_1, b_2,c_2\}$ & $1$\\
      $x_7$ & $\{b_1,d_1, b_2,c_2\}$ & $1$\\
      $x_8$ & $\{c_1,d_1, b_2,c_2\}$ & $1$\\
    \end{tabular}
    \quad
    \begin{tabular}{c|c|c}
      Diagram & Off-diagonal points & Weight\\
      \hline
      $y_1$ & $\{a_1,d_1, a_2,b_2\}$ & $-1$\\
      $y_2$ & $\{a_1,d_1, a_2,c_2\}$ & $-1$\\
      $y_3$ & $\{a_1,d_1, b_2,d_2\}$ & $-1$\\
      $y_4$ & $\{a_1,d_1, c_2,d_2\}$ & $-1$\\
      $y_5$ & $\{b_1,c_1, a_2,b_2\}$ & $-1$\\
      $y_6$ & $\{b_1,c_1, a_2,c_2\}$ & $-1$\\
      $y_7$ & $\{b_1,c_1, b_2,d_2\}$ & $-1$\\
      $y_8$ & $\{b_1,c_1, c_2,d_2\}$ & $-1$\\
    \end{tabular}
    \caption{Counterexample showing that the space of persistence
      diagrams with $W_p$ is not of negative when
      $p<\ln(2)/\ln(4/3)$. The off-diagonal points come from
      Figure~\ref{fig:ptsexample}. The \textit{weight} columns refer
      to the $\alpha$'s in
      inequality~\eqref{eq:negtype}.} \label{tab:smallp}
  \end{center}
\end{table}

We have the following distance matrix for the within-group distances,
\ie/ the symmetric matrix with entries $(d_p(x_i,
x_j))_{i,j}=(d_p(y_i,y_j))_{i,j}$:
\begin{equation*}
  \begin{bmatrix}
    0       	& 2^{1/p} 	& 2^{1/p} & 2^{1/p} & 2^{1/p} & 4^{1/p} & 4^{1/p} &4^{1/p} \\
    2^{1/p} &   0   & 2^{1/p} & 2^{1/p} & 4^{1/p} & 2^{1/p} &4^{1/p} & 4^{1/p} \\
    2^{1/p} & 2^{1/p}  & 0   & 2^{1/p} & 4^{1/p} & 4^{1/p} & 2^{1/p} &4^{1/p} \\
    2^{1/p} & 2^{1/p}     & 2^{1/p} & 0  &4^{1/p} & 4^{1/p} &4^{1/p} & 2^{1/p} \\
    2^{1/p}       	& 4^{1/p} 	& 4^{1/p} & 4^{1/p} & 0 & 2^{1/p} & 2^{1/p} &2^{1/p} \\
    4^{1/p} &   2^{1/p}   & 4^{1/p} & 4^{1/p} & 2^{1/p} & 0 &2^{1/p} & 2^{1/p} \\
    4^{1/p} & 4^{1/p}  & 2^{1/p}   & 4^{1/p} & 2^{1/p} & 2^{1/p} & 0 &2^{1/p} \\
    4^{1/p} & 4^{1/p}     & 4^{1/p} & 2^{1/p}  &2^{1/p} & 2^{1/p} &2^{1/p} & 0 \\
  \end{bmatrix}
\end{equation*}
This implies that
\begin{equation*}
  \sum_{i,j=1}^8 d_p(x_i, x_j) = \sum_{i,j=1}^8 d_p(y_i, y_j) = 32 \cdot 2^{1/p} + 24 \cdot 4^{1/p},
\end{equation*}
and similarly
\begin{equation*}
  \sum_{i,j=1}^8 d_p(x_i,y_j)=\sum_{i,j=1}^8 d_p(y_i, x_j)= 64 \cdot 2^{1/p}.
\end{equation*}
The sum of interest, using the weighting in Table~\ref{tab:smallp}, is
\begin{align*}
  \sum_{i,j=1}^8 d_p(x_i, x_j) &+\sum_{i,j=1}^8 d_p(y_i, y_j) -\sum_{i,j=1}^8 d_p(x_i,y_j)-\sum_{i,j=1}^8 d_p(y_i, x_j)\\
  &= 64 \cdot 2^{1/p} + 48 \cdot 4^{1/p} -128 \cdot 2^{1/p}\\
  & =48 \cdot 4^{1/p}-64 \cdot 2^{1/p}. 
\end{align*}
Now $48 \cdot 4^{1/p}-64 \cdot 2^{1/p}>0$ exactly when
$p<\ln(2)/\ln(4/3)$. This thus shows that the metric space of
persistence diagrams with $W_p$ is not of negative type when
$p<\ln(2)/\ln(4/3)$.

We will now construct a counterexample for space of persistence diagrams under $p$-Wasserstein distance with $p\geq 2.4$. We will construct our counterexample with persistence diagrams containing points listed in Figure~\ref{fig:ptsexample2}. This has separate squares with unit edge length that are sufficiently far apart.
% as displayed in Figure \ref{fig:ptsexample2}.
We will have two sets of persistence diagrams, $X$ and $Y$, and we will be giving a weight of $1$ to all the persistence diagrams in $X$ and a weight of $-1$ to all the persistence diagrams in $Y$.

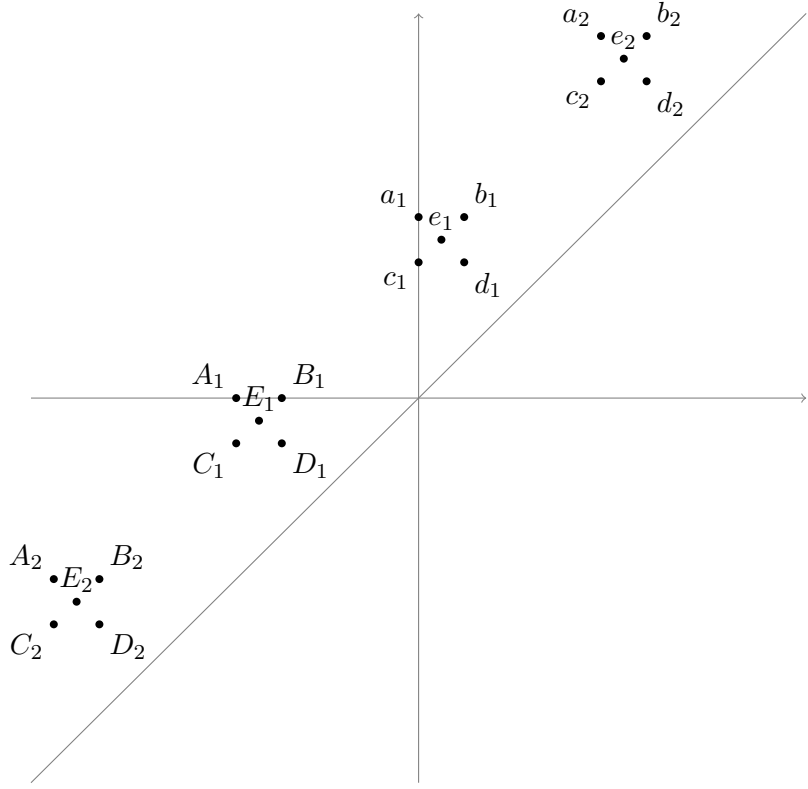
\begin{figure}[htbp]
	\begin{center}
		\begin{tikzpicture}[scale=.3]
		\draw[gray, ->] (0,-17)--(0,17);
		\draw[gray, ->](-17,0)--(17,0);
		\draw[gray](-17,-17)--(17,17);
		
		\fill[] (0, 6) circle (5pt) node[below left] {$c_1$};
		\fill[] (2, 6) circle (5pt) node[below right] {$d_1$};
		\fill[] (0, 8) circle (5pt) node[above left] {$a_1$};
		\fill[] (2, 8) circle (5pt) node[above right] {$b_1$};
		\fill[](1, 7) circle(5pt) node[above] {$e_1$};
		\fill[] (8, 14) circle (5pt) node[below left] {$c_2$};
		\fill[] (10, 14) circle (5pt) node[below right] {$d_2$};
		\fill[] (8, 16) circle (5pt) node[above left] {$a_2$};
		\fill[] (10, 16) circle (5pt) node[above right] {$b_2$};
		\fill[](9, 15) circle(5pt) node[above] {$e_2$};
		
		\fill[] (-8, -2) circle (5pt) node[below left] {$C_1$};
		\fill[] (-6, -2) circle (5pt) node[below right] {$D_1$};
		\fill[] (-8, 0) circle (5pt) node[above left] {$A_1$};
		\fill[] (-6, -0) circle (5pt) node[above right] {$B_1$};
		\fill[](-7, -1) circle(5pt) node[above] {$E_1$};
		\fill[] (-16, -10) circle (5pt) node[below left] {$C_2$};
		\fill[] (-14, -10) circle (5pt) node[below right] {$D_2$};
		\fill[] (-16, -8) circle (5pt) node[above left] {$A_2$};
		\fill[] (-14, -8) circle (5pt) node[above right] {$B_2$};
		\fill[](-15, -9) circle(5pt) node[above] {$E_2$};

		\end{tikzpicture}
		\caption{The off-diagonal points used in the persistence diagrams in the counterexamples for $p\geq 2.4$.}\label{fig:ptsexample2}
	\end{center}
\end{figure}

Each persistence diagram in $X$ will have 4 off-diagonal points; one corner point from each of the squares labelled with upper case letters, and $e_1$ and $e_2$. An example is $\{A_1, B_2, e_1, e_2\}$. There are a total of 16 such persistence diagrams.

Each persistence diagram in $Y$ will have 4 off-diagonal points; one corner point from each of the squares labelled with lower case letters, and $E_1$ and $E_2$. An example is  $\{c_1, c_2, E_1, E_2\}$.

For every pair of persistence diagrams $(x,y)\in X\times Y$ we have $d_p(x,y)=8^{1/p}/2$. This implies that the total between-group pairwise distance is $32\cdot16\cdot 8^{1/p}/2$.

To compute the within group distances we first observe that the symmetry of the counterexample ensures that the sum of distance $\sum_{x\in X}d_p(x,x')$ is the same for all $x'\in X$ and that this is also the same as $\sum_{y\in Y}d_p(y,y')$ for all $y'\in Y$. This means we can compute for a fixed $x'\in X$. We can split the remaining $x\in X$ into cases depending on how many of the off-diagonal points in the persistence diagrams are the same as that in $x'$, are on the same edge of the corresponding square as that in $x'$, or are diagonally opposite corners of the corresponding square. We describe this distribution in Table~\ref{tab:largep}, giving example persistence diagrams. 

\begin{table}[htbp]
  \begin{center}
    \begin{tabular}{c|c|c|c|c|c}
      Same        & Share an      & Diag.\ opp.          & Example      & No.\ of such           & \quad \\
      corner      & edge          & corners              & diagram      & $x\in X$               & $d_p(x,x')$ \\
%      Same corner & Share an edge & Diag.\ opp.\ corners & Ex.\ diagram & No.\ of such $x\in X$ & $d_p(x,x')$\\
      \hline
      $2$ & $0$  & $0$ & $\{A_1,A_2, e_1,e_2\}$& $1$ & $0$ \\
      $1$ & $1$ & $0$ & $\{A_1,B_2, e_1,e_2\}$ & $4$ & $1$ \\
      $1$ & $0$ & $1$ & $\{A_1,D_2, e_1,e_2\}$ & $2$ &  $2^{1/p}$\\
      $0$ & $2$ & $0$ &$\{B_1,C_2, e_1,e_2\}$  & $4$ &  $2^{1/p}$\\
      $0$ & $1$ & $1$ & $\{B_1,D_2, e_1,e_2\}$ & $4$ & $3^{1/p}$\\
      $0$ & $0$ & $2$ & $\{D_1,D_2, e_1,e_2\}$ & $1$ & $4^{1/p}$\\
    \end{tabular}
    \caption{Distances $d_p(x,x')$ for $x\in X$ and $x'=\{A_1, A_2,
      e_1, e_2\}$. The off-diagonal points are those shown in Figure
      \ref{fig:ptsexample}.}\label{tab:largep}
  \end{center}
\end{table}

Using this table, we calculate $\sum_{x\in X}d_p(x,x')=4+ 6\cdot2^{1/p}+ 4\cdot3^{1/p} + 4^{1/p}$. To prove this is a counterexample we need to show that 
$$32 \cdot (4+ 6\cdot2^{1/p}+ 4\cdot3^{1/p} + 4^{1/p})-32\cdot16\cdot 8^{1/p}/2>0.$$
This is equivalent to $4+ 6\cdot2^{1/p}+ 4\cdot3^{1/p} + 4^{1/p} > 8$ and by diving through by $8^{1/p}$ this is equivalent to the condition that
\begin{equation}
4(1/8)^{1/p}+ 6(1/4)^{1/p}+ 4(3/8)^{1/p} + (1/2)^{1/p}>8.\label{eq:suff}
\end{equation}

Now $\lambda^{1/p}$ is an increasing function in $p$, when $\lambda<1$ and $p>1$. Thus for all $p\geq 2.4$ we know $(1/8)^{1/p}\geq(1/8)^{1/2.4}>0.42$, $(1/4)^{1/p}\geq(1/4)^{1/2.4}>0.56$, $(3/8)^{1/p}\geq (3/8)^{1/2.4}>0.66$ and $(1/2)^{1/p}\geq (1/2)^{1/2.4}>0.74$. Together these imply that
$$4(1/8)^{1/p}+ 6(1/4)^{1/p}+ 4(3/8)^{1/p} + (1/2)^{1/p}>4\cdot 0.42+ 6\cdot 0.56 + 4 \cdot 0.66 + 0.74=8.42$$
and \eqref{eq:suff} holds for all $p\geq 2.4$. 
\end{proof}

When performing computations with Wasserstein distances, we used the
approximate Wasserstein distance algorithm implemented in
\textit{Hera}~\cite{Hera}. The algorithm computes the distances up to
arbitrarily chosen relative errors, that we set very low.

\subsection{Persistence landscapes} \label{sec:landscapes}

Recall that $H_*(a,b) := Z_*(K_a)/ (B_*(K_b) \cap Z_*(K_a))$ is the
vector space of non-trivial homology classes in $H_*(K_a)$ that are
still distinct when thought of as elements of $H_*(K_b)$ under the
induced map $H_*(K_a)\to H_*(K_b)$. For $a\leq b$ let
$\beta^{a,b}=\dim(H_*(a,b))$. We can think of
$\beta^{\bullet,\bullet}$ as a persistent version of the ordinary
Betti numbers. Indeed, $\beta^{a,a}$ is the Betti number of
$K_a$. Notably, persistent Betti numbers are non-negative integer
valued functions. Furthermore, when $a\leq b\leq c\leq d$, then
$\beta^{a,d}\leq \beta^{c,d}$. We can construct the persistence
landscape as a sequence of functions which together completely
describe the level sets of these functions.
\begin{definition}
  The \emph{persistence landscape} of some filtration is a function
  $\lambda:\mathbb{N}\times \R \to \overline{\R}$, where
  $\overline{\R}=[-\infty,\infty]$ denotes the extended real numbers,
  defined by
  \begin{equation*}
    \lambda_k(t)=\sup\{m\geq0\,|\,\beta^{t-m,t+m}\geq k\}.
  \end{equation*}
  We alternatively think of the landscapes as a sequence of functions
  $\lambda_k:\R\to \overline{\R}$ with $\lambda_k(t)=\lambda(k,t)$.
\end{definition}

Since persistence landscapes are real-valued functions, we can
consider the space of these functions with the $L^p$ norm
\begin{equation*}
  \|\lambda\|_p^p = \sum_{k=1}^\infty \|\lambda_k\|_p^p
\end{equation*}
for $1\leq p\leq \infty$.

\begin{theorem} \label{thm:Landscapes}
  The following are true for the space of persistence landscapes under
  different $L^p$ norms:
  \begin{enumerate}
  \item $p=2$: It is of strong negative type.
  \item $1\leq p \leq 2$: It is of negative type.
  \item $p=1$: It is not of strong negative type, even when restricting to
    persistence landscapes that arise from persistence diagrams.
  \item $p=\infty$: It is not of negative type, even when restricting
    to persistence landscapes that arise from persistence diagrams.
  \end{enumerate}
\end{theorem}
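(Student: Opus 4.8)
The plan is to treat the four cases separately, leaning on the structure of the persistence-landscape space as a subspace of $L^p(\N\times\R)$. For part (1), the key observation is that the space of persistence landscapes under the $L^2$ norm embeds isometrically into $L^2(\N\times\R)$, which is a separable Hilbert space; by Theorem \ref{thm:SepHilbert}, every separable Hilbert space is of strong negative type, and since the property of (strong) negative type is inherited by subspaces, we are done. First I would make the embedding explicit — noting that each landscape is determined by the sequence $(\lambda_k)_{k\in\N}$ and the norm is exactly the $L^2(\N\times\R)$ norm — and then invoke closure under restriction to subsets. Part (2) is analogous: the cited list (Theorem 3.6 of~\cite{Meckes}) gives that $L^p$ is of negative type for $1\leq p\leq 2$, and again negative type passes to subspaces, so the landscape space with the $L^p$ norm is of negative type for $1\leq p\leq 2$.

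For parts (3) and (4), the strategy is to exhibit explicit counterexamples, and crucially these must use landscapes that genuinely arise from persistence diagrams (so the counterexample cannot exploit ``exotic'' functions outside the image of the landscape map). I would first recall the shape of the landscape of a single-point diagram: a point $(b,d)\in\R^{2+}$ gives $\lambda_1$ a ``tent function'' supported on $[b,d]$ with peak $(d-b)/2$ at the midpoint, and $\lambda_k\equiv 0$ for $k\geq 2$. For small collections of well-separated points one gets a sum of disjoint tents, so the $L^1$ and $L^\infty$ norms of differences are easy to compute. For part (4), $p=\infty$: I would pick a handful of diagrams whose landscapes sit at the vertices of a configuration that violates \eqref{eq:negtype} under the sup norm — the natural candidate is a configuration realizing, up to isometry, four points of an $\ell^\infty$-type metric that fails negative type (analogous to the classical fact that $\R^n$ with $\ell^\infty$ is not of negative type for $n$ large), then check $\sum_{i,j}\alpha_i\alpha_j\|\lambda_i-\lambda_j\|_\infty>0$ directly for a suitable zero-sum choice of $\alpha_i$. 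For part (3), $p=1$: negative type does hold (by part (2)), so I need to defeat \emph{strict/strong} negative type — i.e. find a nonzero zero-sum vector $(\alpha_i)$ with $\sum_{i,j}\alpha_i\alpha_j\|\lambda_i-\lambda_j\|_1=0$, with all $\lambda_i$ landscapes of diagrams. The clean way is to use diagrams whose single-point tents are supported on \emph{disjoint} intervals: then $\|\lambda_i-\lambda_j\|_1$ decomposes as a sum of one-dimensional $L^1$ contributions, and $L^1(\R)$ is the model example of a space that is of negative type but not strict. So I would port the standard $L^1$ witness (e.g. three or four configurations built from disjoint bumps, with coefficients like $1,1,-1,-1$) over to landscapes, and verify the equality by a direct computation of tent-function $L^1$ distances.

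The main obstacle I anticipate is part (3): I must ensure the counterexample configuration is simultaneously (i) realized by actual persistence diagrams — equivalently, the claimed functions really are of the form $\lambda_k(t)=\sup\{m\geq 0 \mid \beta^{t-m,t+m}\geq k\}$ for some diagram, which for disjoint tents just means choosing off-diagonal points with non-overlapping birth–death intervals — and (ii) a genuine equality case, not merely a near-equality, so the algebra has to close exactly. Parts (1) and (2) are essentially immediate from the cited theorems once the isometric-subspace remark is in place; part (4) is a finite computation whose only subtlety is arranging the landscapes so their pairwise sup distances match a known non-negative-type $\ell^\infty$ configuration. I would write (1) and (2) in a few lines, then devote the bulk of the proof to constructing and checking the two explicit diagram-based counterexamples for (3) and (4), reusing the tent-function distance computations for both.
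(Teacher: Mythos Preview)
Your plan matches the paper's proof in structure: parts (1) and (2) are handled exactly as you describe, by citing Theorem~\ref{thm:SepHilbert} and Theorem~3.6 of~\cite{Meckes} together with the fact that (strong) negative type passes to subspaces; and parts (3) and (4) are handled by explicit finite configurations of landscapes coming from barcodes with pairwise disjoint bars, so that only $\lambda_1$ is nonzero and the $L^1$/$L^\infty$ distances are elementary to compute. Your reading of ``single-point tents'' as the tents coming from the individual points of a multi-point diagram is the right one, and the paper's $p=1$ witness is exactly the $(1,1,-1,-1)$ configuration you anticipate, built from four two-bar barcodes.

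There is one concrete error in part (4): a four-point configuration cannot work. Every metric space on at most four points embeds isometrically into $L^1$ (this is classical; see e.g.\ Deza--Laurent), and $L^1$ is of negative type, so every four-point metric space is of negative type. Hence no choice of four persistence landscapes, however arranged, will make $\sum_{i,j}\alpha_i\alpha_j\,\|\lambda_i-\lambda_j\|_\infty>0$. The paper uses six landscapes, three weighted $+1$ and three weighted $-1$, arranged so that the within-group $L^\infty$ distances are all $1$ and the between-group distances are all $1/2$; this is essentially the $K_{3,3}$-type obstruction, and five points is the theoretical minimum. So your strategy is right, but you must plan for at least five (in practice six) diagrams when you sit down to build the $p=\infty$ counterexample.
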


\begin{proof}\,
  \begin{enumerate}
  \item The space of persistence landscapes with the $L^2$ norm is a
    separable Hilbert space. Applying Theorem \ref{thm:SepHilbert}
    shows it is of strong negative type.
  \item As discussed in~\cite{Landscapes}, these function spaces are
    $L^p$ function spaces. From Theorem~3.6 in~\cite{Meckes} we know
    that these are of negative type when $1\leq p\leq 2$.
  \item The space of persistence landscapes with $L^1$ norm is of
    negative type but not of strong negative type. We can construct a
    counterexample using only distributions of landscapes that arise
    from persistent homology. To this end it is sufficient to provide
    appropriate barcodes, each with finitely many bars, as every such
    barcode can be realised. Let
    \begin{align*}
      X_1=I_{[0,1)} \oplus I_{[3,4)},& \qquad  Y_1=I_{[0,1)} \oplus I_{[1,2)},\\
      X_2=I_{[1,2)} \oplus I_{[2,3)},& \qquad  Y_2=I_{[2,3))}\oplus I_{[3,4)}.
    \end{align*}
    Since all the bars in each barcode are disjoint, only the first
    persistence landscape in non-zero. 
    
    Let $\PL(Z)$ denote the
    persistence landscape of $Z$, and $d_1$ the metric induced by
    the $L^1$ norm.
    
     We have $d_1(\PL(X_1),
    \PL(X_2))=2=d_1(\PL(Y_1),\PL(Y_2))$ and $d_1(\PL(X_i),\PL(Y_j))=1$ for all $i,j$. If
    we weight $X_1$ and $X_2$ by $1$, and the $Y_1$ and $Y_2$ by $-1$,
    then the weighted sum from inequality~\eqref{eq:negtype} is $0$,
    which means that the space of persistence landscapes with $L^1$
    norm is of non-strict negative type.
  \item For $p=\infty$ the space of persistence landscapes is not of
    negative type. We can construct a counterexample using only
    distributions of landscapes that arise from persistent
    homology. Again we do to this via barcodes. Let
    \begin{align*}
      X_1 & =I_{[0,2)} \oplus I_{[6.5,7.5)}\oplus I_{[8.5,9.5)}\oplus I_{[10.5,11.5)} \\
      Y_1 &= I_{[0.5,1.5)} \oplus I_{[2.5,3.5)}\oplus I_{[4.5,5.5)}\oplus I_{[6,8)}\\
      X_2 &= I_{[2,4)} \oplus I_{[6.5,7.5)}\oplus I_{[8.5,9.5)}\oplus I_{[10.5,11.5)}\\
      Y_2 &= I_{[0.5,1.5)} \oplus I_{[2.5,3.5)}\oplus I_{[4.5,5.5)}\oplus I_{[8,10)}\\
      X_3 &= I_{[4,6)} \oplus I_{[6.5,7.5)}\oplus I_{[8.5,9.5)}\oplus I_{[10.5,11.5)}\\
      Y_3 &= I_{[0.5,1.5)} \oplus I_{[2.5,3.5)}\oplus I_{[4.5,5.5)}\oplus I_{[10,12)}.
 \end{align*}
    Since all the bars in each barcode are disjoint, only the first
    persistence landscape is non-zero.
 
    It is straightforward to compute the $L^\infty$ distances between
    the corresponding persistence landscapes. Let $\PL(Z)$ denote the
    persistence landscape of $Z$, and $d_\infty$ the metric induced by
    the $L^\infty$ norm. We see that
    $d_\infty(\PL(X_i),\PL(X_j))=1=d_\infty(\PL(Y_i),\PL(Y_j))$ when
    $i\neq j$ and $d_\infty(\PL(X_i),\PL(Y_j))=0.5$ for all $i,j$. If
    we weight each of the $X_i$ with $1$ and the $Y_i$ by $-1$ we get
    the desired counterexample showing that the space of persistence
    landscapes with the $L^\infty$ distance is not of negative type.
 \end{enumerate}
\end{proof}
  
Persistence landscapes computations were performed using the
persistence landscapes toolkit~\cite{toolkit}.

\subsection{Persistence scale space kernel}
The persistence scale space kernel is a modification of scale space
theory to a persistence diagram setting. Extra care is needed to
consider the role of the diagonal. The idea is to consider the heat
kernel with an initial heat energy of Dirac masses at each of the
points in the persistence diagram with the boundary condition that it
is zero on the diagonal. The amount of time over which the heat
diffusion takes place is a parameter. More formally, it is defined
in~\cite{Heat} as follows.
\begin{definition}
  Let $\delta_p$ denote a Dirac delta centered at the point $p$. For a
  given finite persistence diagram $D$ with only finite
  lifetimes\footnote{When analyzing real data, one often cones off the
    space at some more or less meaningful maximum filtration so as to
    avoid infinite intervals.}, we now consider the solution
  $u:\R^{2+} \times \R_{\geq 0}\to \mathbb{R}$ of the partial
  differential equation
  \begin{align*}
    \Delta_x u&=\partial_t u &  \text{in } \R^{2+}\times \mathbb{R}_{>0},\\
    u&= 0 &  \text{on } \partial\R^{2+}\times \mathbb{R}_{\geq0} = \Delta\times\mathbb{R}_{\geq 0},\\
    u&= \sum_{p\in D} \delta_p& \text{on } \R^{2+}\times\{0\}.
  \end{align*}

\end{definition}

The solution $u(\bullet, t)$ lies in $L_2(\R^{2+})$ whenever $D$ has
finitely many points. It has a nice closed expression using the
observation that it is the restriction of the solution of a PDE with
an initial condition where below the diagonal we start with the
negative of the Dirac masses over the reflection of the points in the
diagram above the diagonal. For $x\in \R^{2+}$ and $t>0$ we
have
\begin{equation*}
  u(x,t)=\frac{1}{4\pi t}\sum_{(a,b)\in D}
  \left(\exp\left(\frac{-\|x-(a,b)\|^2}{4t}\right)
  -\exp\left(\frac{-\|x-(b,a)\|^2}{4t}\right)\right).
\end{equation*}
 
The metric for the space of persistence scale shape kernels is that of
$L^2(\mathbb{R}^{2+})$. The closed form for the persistence scale
space kernel allows a closed form of the pairwise distances in terms
of the points in the original diagrams. In particular for diagrams $F$
and $G$ and fixed $\sigma>0$, this distance can be written in terms of
a kernel $k_\sigma(F,G)$, where
\begin{equation*}
  k_\sigma(F,G)=\frac{1}{8\pi\sigma}\sum_{(a,b)\in F}\sum_{(c,d)\in G}\left( \exp\left(\frac{-\|(a,b)-(c,d)\|^2}{8\sigma}\right)-\exp\left(\frac{\|(a,b)-(d,c)\|^2}{8\sigma}\right)\right)
\end{equation*}
and the corresponding distance function is
\begin{equation*}
  d(F, G)= \sqrt{k_\sigma(F,F)+k_\sigma(G,G)-2k_\sigma(F,G)}.
\end{equation*}

Since $L^2(\mathbb{R}^{2+})$ is a separable Hilbert space, this metric
is of strong negative type.

%in paper:
%Unfortunately, as observed experimentally in Appendix A of [33], d1 is not conditionally negative definite (in practice, it only suffices to sample a family of point clouds to observe experimentally that more often than not the inequality above will be violated for a particular weight vector a). Actually, as observed in [30], even the square of the diagram distances dp cannot be used to define Gaussian kernels. Indeed, it was noted in Theorem 6 of [16] that, if the square of a distance d defined on a geodesic space X is conditionally negative definite, then the metric space X is flat,
 
%or CAT(0). However, since the metric space D, equipped with dp, p ? N ? {+?}, is not CAT(k) for any k > 0?which is due to the non-uniqueness of geodesics, see [40]?it follows that d2p is not conditionally negative definite.

\subsection{Sliced Wasserstein kernel distance}

The sliced Wasserstein distance between persistence diagrams,
introduced in~\cite{Sliced}, works with projections onto lines through
the origin. For each choice of line, one intuitively computes the
Wasserstein distance between the two projections (a computationally
much easier problem, being a matching of points in one dimension), and
then integrates the result over all choices of lines. More formally
the definition in~\cite{Sliced} is as follows.
\begin{definition}
  Given $\theta\in \mathbb{R}^2$ with $\|\theta\|_2=1$, Let
  $L(\theta)$ denote the line $\{\lambda \theta: \lambda\in
  \mathbb{R}\}$, and let $\pi_\theta:\mathbb{R}^2\to L(\theta)$ be the
  orthogonal projection onto $L(\theta)$. Let $D_1$ and $D_2$ be two
  persistence diagrams, and let $\mu_i^\theta=\sum_{p\in
    D_i}\delta_{\pi_\theta(p)}$ and $\mu_{i\Delta}^\theta=\sum_{p\in
    D_i} \delta_{\pi_\theta\circ
    \pi_{(\frac{1}{\sqrt{2}},\frac{1}{\sqrt{2}})}(p)}$ for
  $i=1,2$. Then the \emph{sliced Wasserstein distance} is defined as
  \begin{equation*}
    \SW(D_1, D_2) = \frac{1}{2\pi} \int_{S^1} \mathcal{W}(\mu_1^\theta+\mu_{2\Delta}^\theta, \mu_2^\theta+\mu_{1\Delta}^\theta)d\theta
  \end{equation*}
  where the $1$-Wasserstein distance $\mathcal{W}(\mu, \nu)$ is
  defined as $\inf_{P\in \Pi(\mu,\nu)}
  \int\int_{\mathbb{R}\times\mathbb{R}}|x-y|P(dx,dy)$ where $\Pi(\mu,
  \nu)$ is the set of measures on $\mathbb{R}^2$ with marginals $\mu$
  and $\nu$.
\end{definition}

It was shown in~\cite{Sliced} that the sliced Wasserstein distance is
conditionally seminegative definite on the space of finite and bounded
persistence diagrams. This is equivalent to the condition of being of
negative type. It is an open question as to whether it is of strong
negative type.

In~\cite{Sliced}, the authors construct a kernel with bandwidth
parameter $\sigma>0$ in the standard way (see~\cite{harmonic}), namely
\begin{equation*}
  k_\sigma(D_1, D_2) = \exp\left(\frac{-\SW(D_1,D_2)}{2\sigma^2}\right).
\end{equation*}
It being a kernel in the sense that
\begin{equation*}
  k_\sigma(D_1, D_2) = \left\langle \phi(D_1), \phi(D_2) \right\rangle_{\mathcal{H}}
\end{equation*}
for some function $\phi$ into a Hilbert space $\mathcal{H}$, one
obtains a distance function $d_{\text{kSW}}$ with
\begin{equation*}
  d_{\text{kSW}}(D_1, D_2)^2 =k_\sigma(D_1, D_1) + k_\sigma(D_2) - 2k_\sigma(D_1, D_2).
\end{equation*}
If this reproducing kernel Hilbert space $\mathcal{H}$ is separable,
then the space of persistence diagrams with $d_{\text{kSW}}$ will be
of strong negative type. This separability property is an open
question.

In our computations, we always projected onto $10$ equidistributed lines.

\section{Distance correlation between different topological summaries}

The differences between the metrics used can dramatically affect the
statistical analysis of a data set. It is important to choose a
summary such that the domain-specific differences in the input data
that are of interest are reflected in the distances between their
corresponding topological summaries.

The key idea in this section is to take the same object, for example
generated through a random process, and then to record different
topological summaries of it. As we have seen, this gives us different
metric space structures on the data.  We then compare the pairwise
distances using distance correlation.

We consider a variety of more or less standard or well known families
of random cell complexes and their filtrations, as well as some
non-random data.

\subsection{Erdős--Rényi}

We constructed the weighted version of $100$-vertex Erdős--Rényi
random graphs, which is to say we endow the complete graph on $100$
vertices with uniform random independent edge weights. The flag
complexes of each of these are then the filtrations we consider. We
generated $100$ such filtrations to get sample the distribution of
degree-$1$ persistent homology of such complexes. An example
persistence diagram is shown in Figure~\ref{fig:ER}. We then computed
the distance correlation between the different topological summaries,
with the result shown in Figure~\ref{fig:ER}.
\begin{figure}[htbp]
  \centering
  \includegraphics{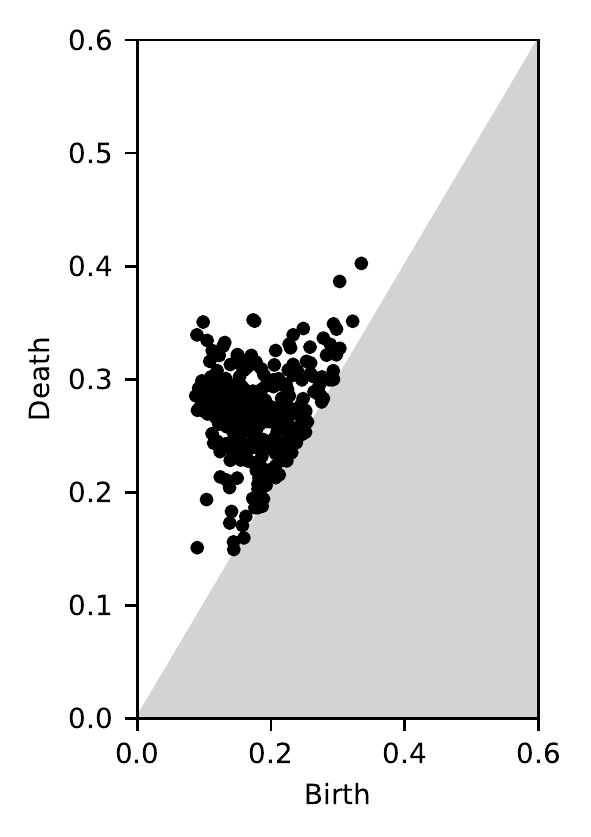}
  \includegraphics{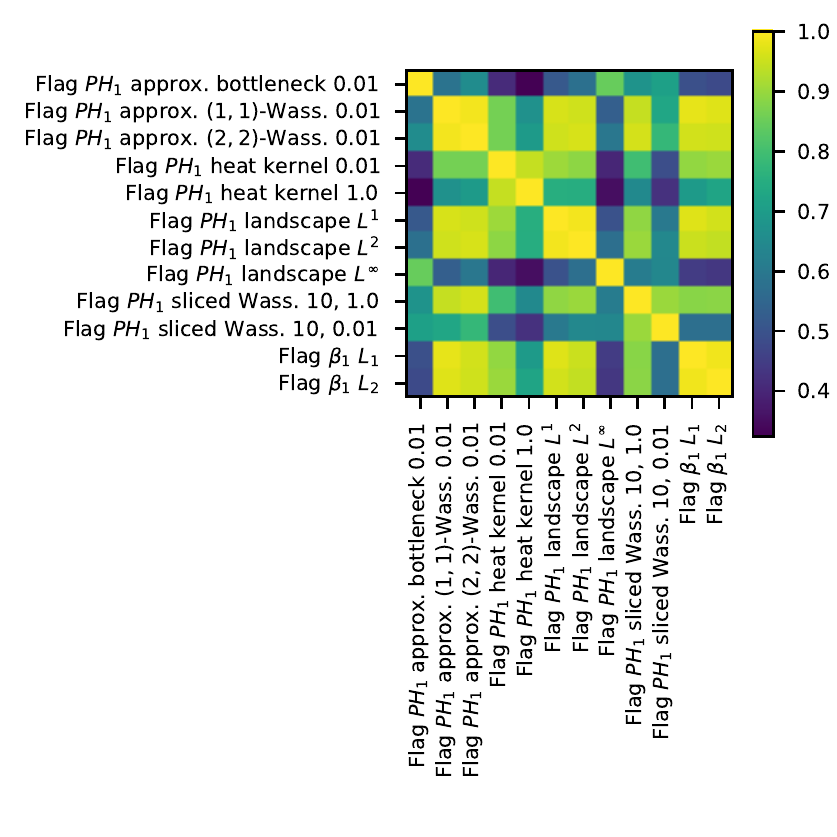}
  \caption{A typical degree-$1$ persistence diagram (\textbf{left})
    and the sampled square root of distance correlation ($\dCor$)
    between different topological summaries for Erdős--Rényi complexes
    (\textbf{right}).}
  \label{fig:ER}
\end{figure}

The persistent homology computations were performed using
\textit{Ripser}~\cite{ripser}.

\subsection{Directed Erdős--Rényi}

A directed analog of the flag complex of undirected graphs was
introduced in~\cite{BBP}. To construct such flag complexes, we
generated $100$ instances of the independently random uniform weights
on the complete directed graph on $100$ vertices (taking ``complete
directed graph'' to mean having opposing edges between every pair of
vertices), and computed the corresponding filtrations and
degree-$1$ persistent homology of directed flag complexes using
\textit{Flagser}~\cite{flagser}. An example persistence diagram is
shown in Figure \ref{fig:DER}. We then computed the distance
correlation between the different topological summaries, with the
result shown in Figure~\ref{fig:DER}.
\begin{figure}[htbp]
  \centering
  \includegraphics{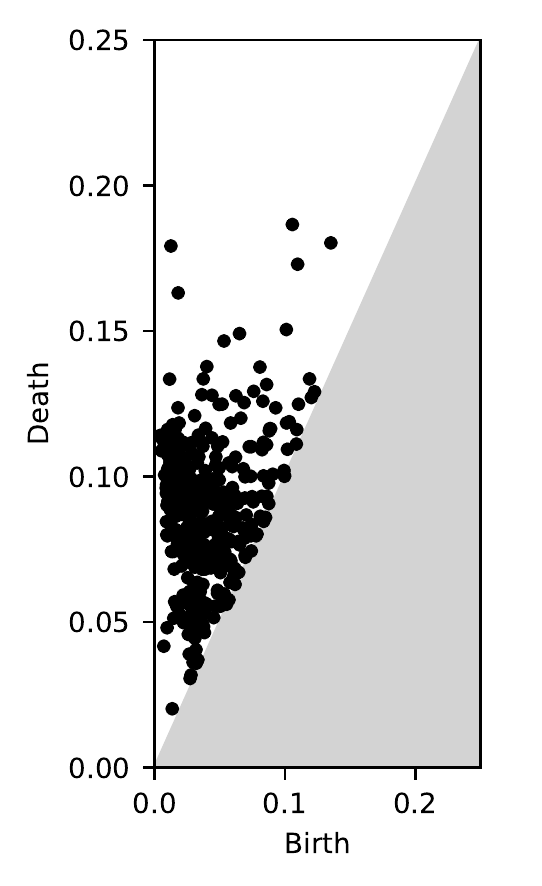}
  \includegraphics{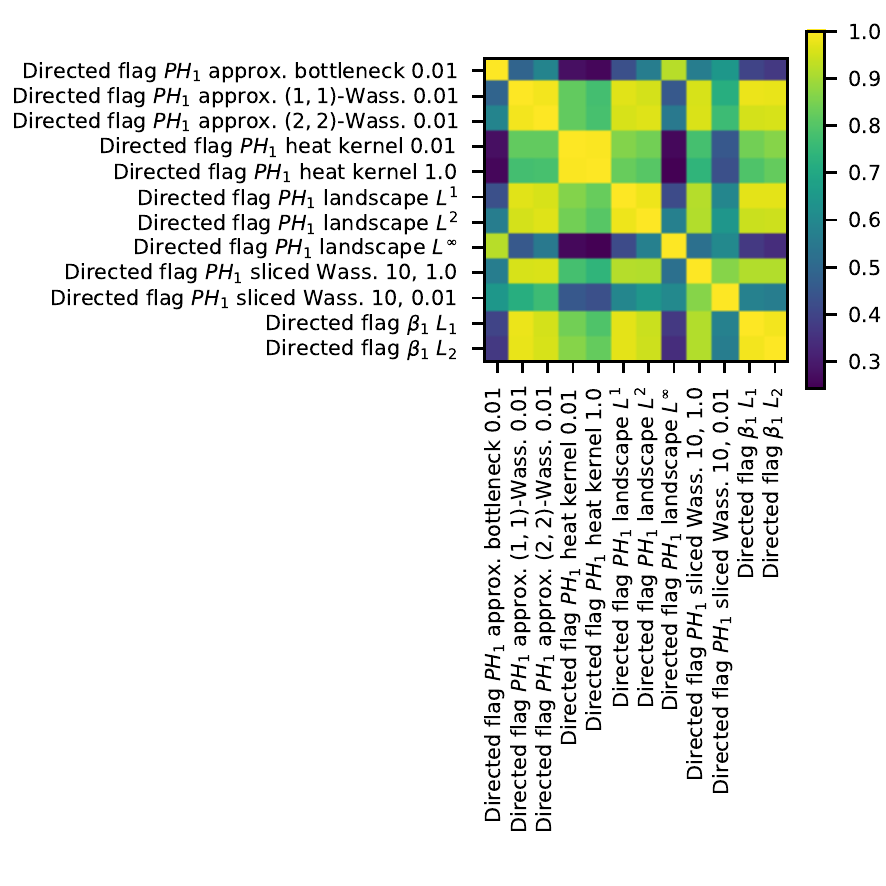}
  \caption{A typical degree-$1$ persistence diagram (\textbf{left})
    and the sampled square root of distance correlation ($\dCor$)
    between different topological summaries for directed Erdős--Rényi
    complexes (\textbf{right}).}
  \label{fig:DER}
\end{figure}

\subsection{Geometric random complexes for points sampled on a torus}

For this dataset, we randomly sampled $500$ points
independently from a flat torus in $\R^4$ by sampling $[0,2\pi)^2$
uniformly and considering the image of $(s,t)\mapsto(\cos s, \sin s,
\cos t, \sin t)$. We then built the alpha complex over this set
of points. This was performed $100$ times to construct samples of
the distribution of persistent homology in degree $1$ for such complexes. An
example persistence diagram is shown in
Figure~\ref{fig:GeomTorus}. We then computed the distance
correlation between the different topological summaries, which is
shown in Figure~\ref{fig:GeomTorus}.
 \begin{figure}[htbp]
  \centering
  \includegraphics{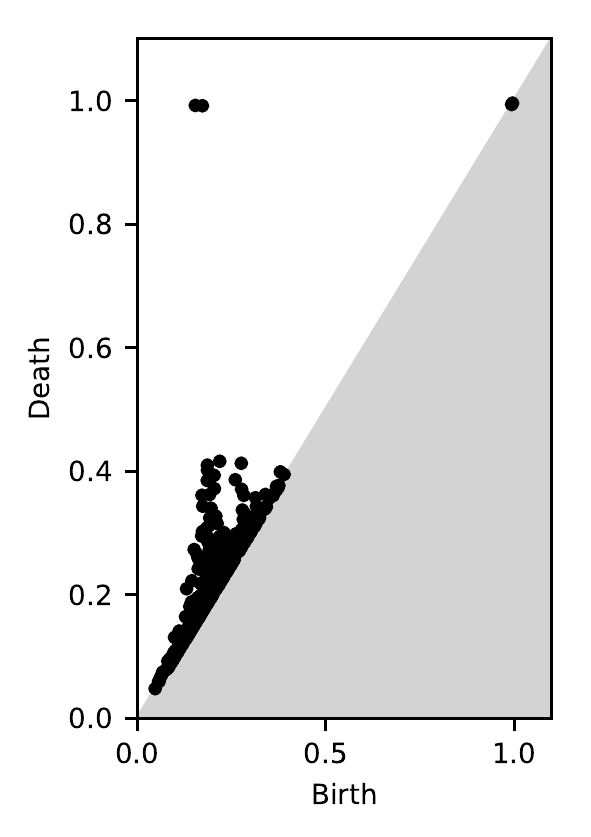}
  \includegraphics{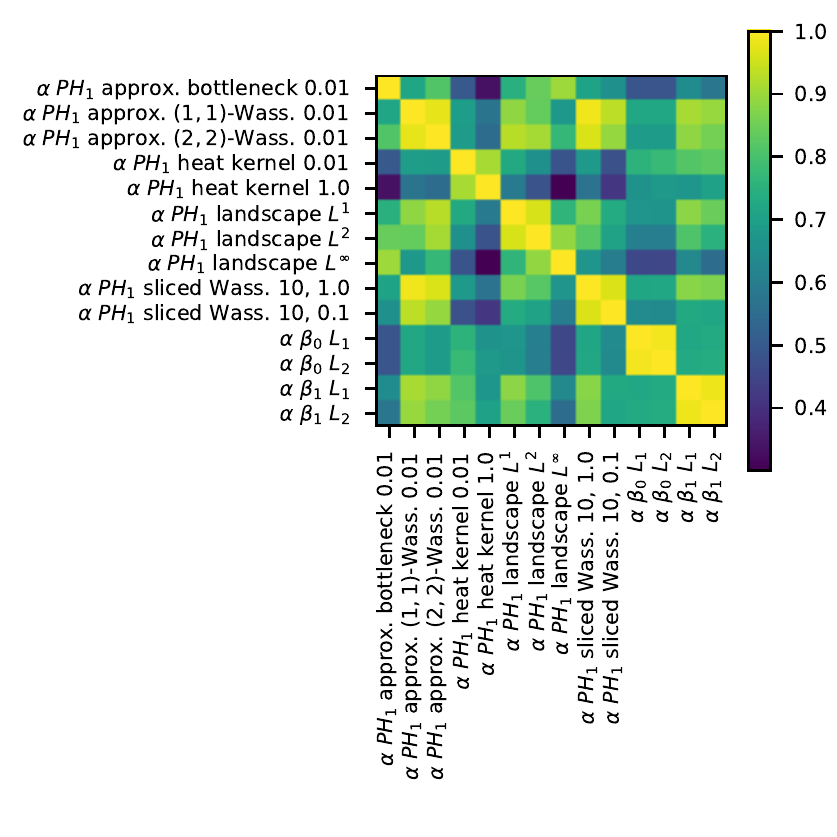}
  \caption{A typical degree-$1$ persistence diagram (\textbf{left})
    and the sampled square root of distance correlation ($\dCor$)
    between different topological summaries (\textbf{right}) for alpha
    complexes built from random points clouds sampled from a flat
    torus lying in $\R^4$.}
  \label{fig:GeomTorus}
\end{figure}

The computations of alpha complexes and persistent homology for this
dataset were done using \textit{GUDHI}~\cite{GUDHI}.

\subsection{Geometric random complexes for point sampled from a unit cube}

For this dataset, we uniformly randomly sampled $500$ points
independently from the unit cube $[0,1]^3$. We then constructed the
alpha complex over this set of points. This was performed $100$ times
to sample the distribution of persistent homology for such
complexes. An example persistence diagram is in
Figure~\ref{fig:GeomCube}. We then computed the distance correlation
between the different topological summaries which is shown in
Figure~\ref{fig:GeomCube}.
\begin{figure}[htbp]
  \centering
  \includegraphics{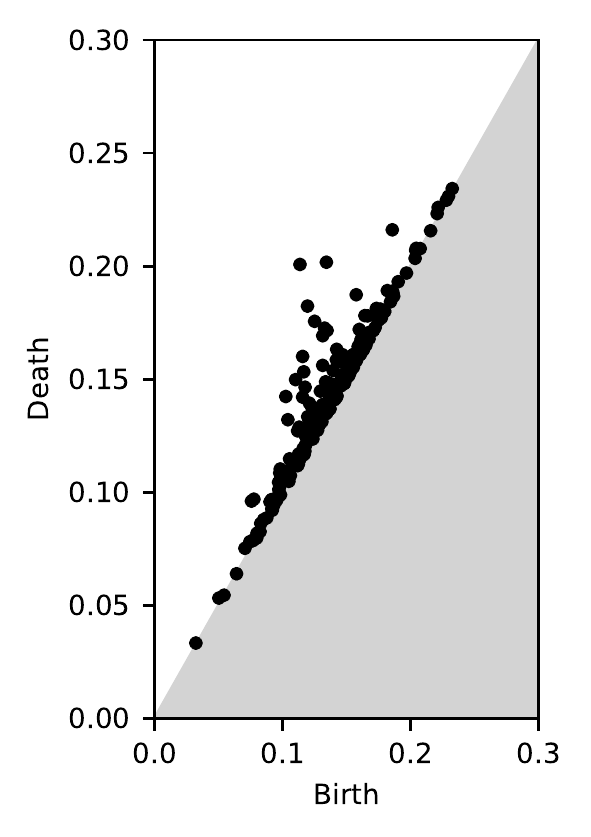}
  \includegraphics{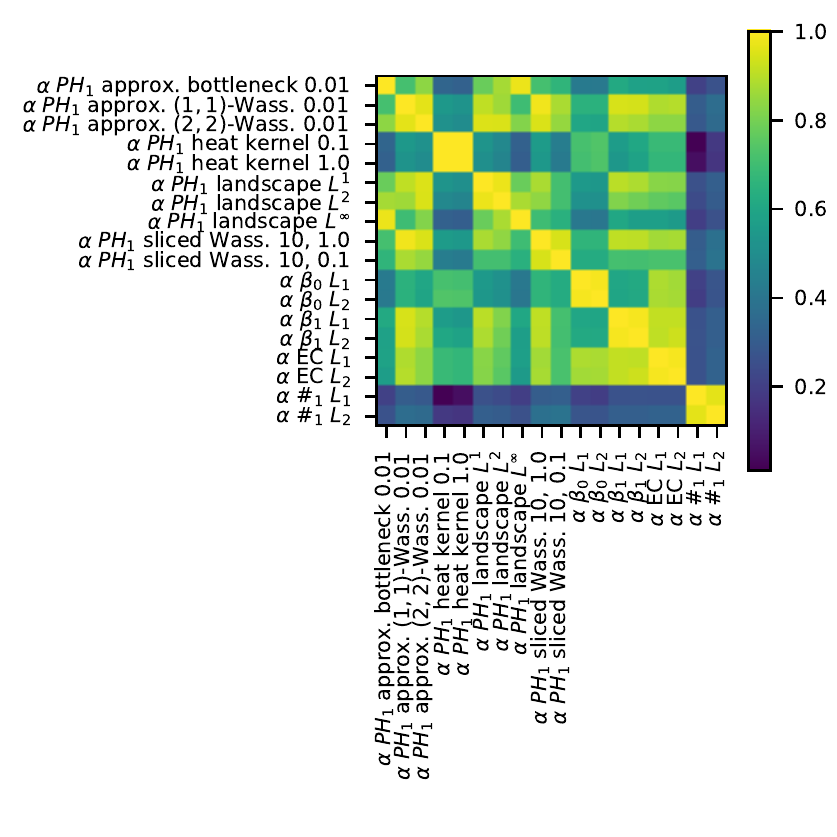}
  \caption{A typical degree-$1$ persistence diagram and the sampled
    square root of distance correlation ($\dCor$) between different
    topological summaries for alpha complexes built from random points
    clouds sampled from the unit cube.}
  \label{fig:GeomCube}
\end{figure}

For this particular dataset, we also computed a very non-topological
summary based on the same underlying complex, namely the counts of
$1$-simplices ($\#_1$ in Figure~\ref{fig:GeomCube}). These were
considered as ``count curves'' in the obvious way, and endowed with
the $L^1$ and $L^2$ metrics. They, unsurprisingly, correlate little
with the topological summaries.

The computations of the alpha complexes and persistent homology for
this dataset were done using \textit{GUDHI}~\cite{GUDHI}.

\subsection{Observations about the distance correlation in these simulations}

The first general comment is that the sampled distance correlations for the tological summaries split these different simulations into two groups; one group contains the directed Erd\"os R\'enyi filtrations and the Erd\"os R\'enyi filtrations, and the other group of simulations contain filtrations built from random point clouds either on the flat torus on is the unit cube. This is not too surprising as the both directed Erd\"os R\'enyi filtrations and the Erd\"os R\'enyiscenarios represent completely random types of complexes without correlations on the simplex values. In contrast, for fitrations built on point clouds there are geometric constriants which imply correlations between the filtration values on neighbouring simplices. This in turn affects the observed topology.

For both the persistence diagrams and the persistence landscapes sampled the distance correlations for $p=1,2$ and $\infty$. In all of the simulation studies the metrics from $p=1$ and $p=2$ of the same topological summary generally have high distance correlation, but that they are quite different to the $p=\infty$ version of that same topological summary. This is particularly prenounced in the Erd\"os R\'enyi and directed Erd\"os R\'enyi filtrations. In fact here the distance correlation between digrams and landscapes with $p=1$ and $p=2$ is higher than the distance correlation between bottleneck distances and $p$-Wasserstein distances for $p=1$ or $p=2$, and similarly for landscapes. One explanation is that in the completely random scenario we can have more extremal persistent homology classes and these extremal persistent homology classes dominate the $p=\infty$ metrics more that in the $p=1$ and $p=2$ metrics. 

Another observation is that overall we see high correlation between the Sliced Wasserstein distances and the Wasserstein ($p=1$ or $p=2$) distances. Perhaps not surprising since both are geometrically measuring similar quantities with a pairing process of points involved in both distances (though the pairing potentially varying between slices in the Sliced Wasserstein).

\section{Distance correlation to another parameter}

Instead of considering the correlation between two distances of
topological summaries, one may want to consider the correlation
between a metric on topological summaries and some real number
relating to the underlying model. The real number may for example
parameterise the underlying model, or it may be some function of the
model that has domain-specific meaning. We will here consider only
parameters and functions with codomain in (intervals in) $\R$, and
consider that space as a metric space equipped with the absolute value
distance.

For brevity, we will from now on refer also to the value of certain
domain-specific functions on the underlying model as ``parameters'',
even though they strictly speaking are not (see for example the case
of elevation data below, where terrain smoothness will incorrectly be
referred to as a parameter of the landscape). We will also use the
letter $\mathcal{P}$ to denote the parameter space as a metric space
with the absolute value distance.

We can use distance correlation to quantify how well the distances
between some topological summaries relate to the differences in the
parameter. The varying performances of the different topological
summaries in correlating to the parameter highlights how the choice of
topological summary has statistical significance.

%% In this study we consider a model of random filtrations with a
%% parameter $\gamma \in [0,1]$ such that for $\gamma=0$ we have a
%% geometric Rips filtration, and for $\gamma=1$ we have an Erdos-Renyi
%% Rips filtration. We compute the distance correlation of the
%% topological summaries to this parameter $\gamma$. We also consider
%% some topographic data from maps of Norway and compute the distance
%% correlation of topological summaries of the elevation function to
%% geographic measurements of geodesic measurements between the locations
%% of the maps and also distance correlation to the Terrain Roughness
%% Index.

\subsection{Parameterised interpolation between Erdős--Rényi and geometric complexes}

Our parameter space is now $[0,1]$. Each sampled filtered complex with
parameter $\gamma\in[0,1]$ is built by sampling $100$ points
$X=\{x_1,\dotsc, x_{100}\}$ i.i.d.\ uniformly from the unit cube
$[0,1]^3$, and sampling the entries of a symmetric matrix
$E\in\R^{100\times 100}$ i.i.d.\ uniformly from $[0,1]$. We endow a
complete graph on $100$ vertices with weights $w_{i,j}$ for each pair
$1\leq i<j\leq 100$ by letting $w_{i,j}=E_{i,j}$ with probability
$\gamma$ and $w_{i,j}=\|x_i-x_j\|$ with probability $1-\gamma$. The
filtered complex generated is then the flag complex of this
graph. Observe that this is a (Vietoris--Rips) version of the random
geometric complex considered before when $\gamma=0$, and the
Erdős--Rényi complex when $\gamma=1$.

A correlation between the parameter space and a given metric on a
topological summary is then a measure of how well that metric detects
the parameter.

For this experiment, we let $\gamma$ take the $100$ equally spaced
values from $[0,1]$, including the endpoints. These distance
correlations are displayed in Table~\ref{tab:ER-Geom}. The higher the
distance correlation, the better the topological summary reflects the
effect of the parameter $\gamma$. We see that generally the function
distances between Betti curves, Wasserstein distances and bottleneck
distances between and the function distances between persistence
landscapes had a higher correlation, all with a distance correlation
greater than $0.9$. This illustrates that these topological summaries
would be good choices if we wish to do learning problems or
statistical analysis with regards to this parametrised random model,
such as parameter estimation. We also see the importance of the choice
of bandwidth with dramatic effect on the distance correlation of the
persistence scale space kernel and the Sliced Wasserstein kernel.

\begin{table}[htbp]
  \begin{center}
    \begin{tabular}{l|c}
      Topological summary                                    & $\dCov(\bullet, \mathcal{P})$ \\
      \hline
      Persistence scale space kernel, $\sigma=0.001$         & $0.96$                      \\
      $1$-Wasserstein                                        & $0.95$                      \\
      $\beta_1$ with $L^1$                                   & $0.95$                      \\
      $\beta_1$ with $L^2$                                   & $0.95$                      \\
      $2$-Wasserstein                                        & $0.94$                     \\
      Persistence landscape with $L^\infty$                   & $0.94$                      \\
      Persistence scale space kernel, $\sigma=0.01$          & $0.93$                     \\
      Persistence landscape with $L^2$                      & $0.92$                      \\
      Persistence landscape with $L^1$                       & $0.92$                     \\
      Sliced Wasserstein kernel, $\sigma=1$                 & $0.66$                      \\
      Persistence scale space kernel, $\sigma=1$            & $0.60$                      \\
      Sliced Wasserstein kernel, $\sigma=0.01$              & $0.40$
    \end{tabular}
    \caption{(Square roots of) distance correlation between topological summaries and the parameter $\gamma$.}
    \label{tab:ER-Geom}
  \end{center}
\end{table}

The persistent homology computations were performed using
\textit{Ripser}~\cite{ripser}.

\subsection{Digital elevation models and terrain ruggedness} \label{sec:dem}

As a simple example of ``real world'' data, we considered digital
elevation model (DEM) data for a $50$ km by $50$ km patch around the
city of Trondheim, Norway\footnote{The data was provided by the
  Norwegian Mapping Authority~\cite{Maps} under a CC-BY-4.0
  license.}. The DEM data set maps elevation data with a horizontal
resolution of $10$ m $\times$ $10$ m and a vertical resolution of
about $1$ m, and as such provides a terrain height
map. Figure~\ref{fig:Map} shows the DEM our data was based on.
\begin{figure}[htbp]
  \centering
  \includegraphics[width=100mm]{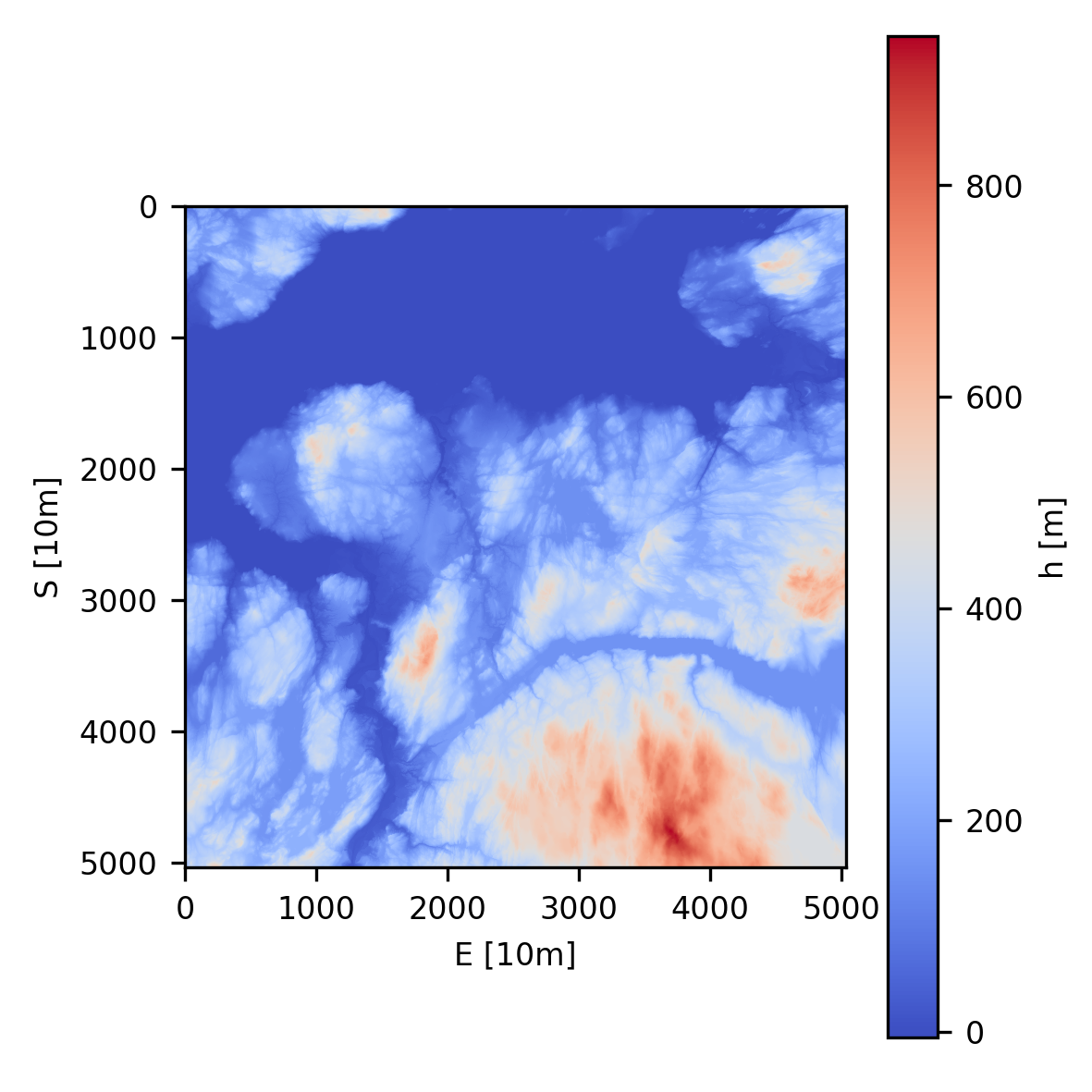}
  \caption{The digital elevation model near Trondheim, Norway. Each
    filtration we construct is based on one of $64$ $1000\times1000$
    sub-blocks of this DEM.}
  \label{fig:Map}
\end{figure}

The data can, at the aforementioned horizontal resolution, be
considered as a $5000\times5000$ integer-valued matrix $Z$, where each
entry is interpreted as the height above a reference elevation in the
vertical resolution unit. Each filtered complex we consider comes from
$1000\times 1000$ block in $Z$. The blocks overlap up to $50\%$, and
we keep a total of $64$ blocks. Each block is then considered as a
two-dimensional cubical complex with the elevation data as the height
filtration on the $2$-cells.

The \emph{terrain ruggedness indicator (TRI)} is an extremely simple
measure of local terrain ruggedness that is widely employed in GIS and
topography~\cite{TRI}. The TRI itself is a real-valued function
defined on each map point/pixel, and a high value indicates a locally
more rugged terrain. We simplify the measure even further by averaging
the TRI for the whole map chunk considered, thus assigning a single
real number to each map chunk. It is this number that will play the
role of one parameter assigned to each of the $64$ map chunks
considered, which we will call $\mathrm{TRI}$. The results are shown
in Table~\ref{table:DEM}.

Another natural metric that can be defined on the raw data (the
$1000\times 1000$ chunks) itself is the actual geodesic distance
between the centers of the chunks. We also computed distance
correlations between the metrics on topological summaries and this
geodesic distance, although one must remember that it is perhaps not
reasonable to expect a high correlation here; indeed, topographies
with topologically highly interesting height functions may exist on a
coastline, and thus be very close topologically trivial terrain. The
results in Table~\ref{table:DEM} are therefore quite surprising.

\begin{table}[htbp]
  \begin{center}
    \begin{tabular}{l|c|c}
      Summary (topological, apart from first two)     & $\dCor(\bullet, \mathrm{TRI})$ & $\dCor(\bullet, d_{\text{geodesic}})$ \\
      \hline
      $\mathrm{TRI}$                                & $1$                            & $0.72$                            \\
      $d_{\text{geodesic}}$                             & $0.72$                         & $1$                               \\
      Bottleneck                                    & $0.62$                          & $0.52$                           \\
      $2$-Wasserstein                               & $0.92$                         & $0.74$                            \\
      Persistence scale space kernel, $\sigma=1$    & $0.74$                         & $0.64$                            \\
      Persistence scale space kernel, $\sigma=10$   & $0.75$                         & $0.63$                            \\
      Persistence landscape with $L^1$              & $0.73$                         & $0.61$                                  \\
      Persistence landscape with $L^2$              & $0.72$                         & $0.59$                                  \\
      Persistence landscape with $L^\infty$          & $0.62$                         & $0.52$                           \\
      Sliced Wasserstein kernel, $\sigma=1$        &  $0.44$                         & $0.55$  \\
      Sliced Wasserstein kernel, $\sigma=0.01$     & $0.44$                          & $0.55$  \\
      $\beta_1$ with $L^1$                           & $0.75$                        & $0.65$                            \\
      $\beta_1$ with $L^2$                           & $0.77$                         & $0.63$
    \end{tabular}
    \caption{(Square roots of) distance correlation between elevation
      topological summaries and the terrain ruggedness index and the
      geodesic distance.}\label{table:DEM}
  \end{center}
\end{table}

The cubical complex persistent homology calculations were doing using
\textit{GUDHI}~\cite{GUDHI}.

\section{Future directions} \label{sec:future}

Non-parametric statistics is a fruitful area for ideas and inspiration
for methods that can be applied in conjunction with TDA. There are
already a variety of options that only use pairwise distances,
including null-hypothesis testing, clustering, classification, and
parameter estimation. In all these cases, we would expect that
distance correlation would be a good estimator for similarity of
statistical analyses.

We can perform null hypothesis testing with topological summaries via
a permutation text with a loss function a function of the pairwise
distances (see \cite{Permutation}). Intuitively, when there is a high
distance correlation, the pairwise distances are correlated and the
corresponding loss functions should be similar for each permutation of
the labels. This implies we should expect that the $p$-values given a
sample distribution should be close, at least with high
probability. It may be possible to show that the power of the null
hypothesis tests are close. An experimental and theoretical
exploration of this relationship is a future direction.

We can also think of considering a modification of the permutation
test for independence using distance correlation (instead of Pearson
correlation). This can then be applied to topological summaries. One
can get a $p$-value that for whether two variables are independent by
permuting the coupling of the variables but keeping the marginal
distributions the same. A high ranking of the distance correlation for
the original joint distribution would indicate that the variables are
not independent, with high probability. Exploring the power of this is
a future direction of research.

Another non-parametric method is parameter estimation using nearest
neighbours. One method for estimating a real valued parameter which is
unknown on a particular sample, but is known on a training set, is to
take a weighted average of the values of the parameter on the training
set with the weighting dependent on the pairwise distances from the
sample of interest to those in the training set. We would expect
better estimation when the distance correlation between the samples
and the parameter of interest is high. Future directions for research
can include experimental and theoretical results along these lines
with respect to topological summaries. In particular, we would expect
that we should be able to create confidence intervals for the
parameter, dependent on the distance correlation. This is also an area
where we should expect similar statistical analysis when the samples
have high distance correlation.

Completely analogous to the above comments, clustering methods using
pairwise distances should have similar results when the sets of
samples have high distance correlation and future work could explore
this with respect to topological summaries.

\section*{Acknowledgments}

G.S.\ would like to thank Andreas Prebensen Korsnes of the Norwegian
Mapping Authority for going out of his way to facilitate bulk
downloads of DEM data before a single region was decided upon for the
experiment in section~\ref{sec:dem}.

G.S.\ was supported by Swiss National Science Foundation grant number 200021\_172636.

\printbibliography

\end{document}